\newtheorem{defi}{\bf Definition}
\newtheorem{lem}{\bf Lemma}
\newtheorem*{asm}{\bf Assumption}
\newtheorem{prop}{\bf Proposition}
\newtheorem{thm}{\bf Theorem}
\newtheorem{coro}{\bf Corollary}
\newtheorem{ex}{\bf Example}
\newtheorem{rem}{\bf Remark}
\newcommand{\eE}{\mathbf{E}}
\newcommand{\pP}{\mathbf{P}}
\newcommand{\nN}{\mathbb{N}}
\newcommand{\rR}{\mathbb{R}}
\newcommand{\qQ}{\mathbb{Q}}
\newcommand{\cadlag}{c\`{a}dl\`{a}g }
\title[Weak Limits of Autoregressive Processes]{Weak Limits of Random Coefficient Autoregressive Processes and their Application in Ruin Theory}
\begin{document}

\maketitle

\begin{center}
{\large  Y. DONG} \\
\smallskip
LAREMA, UMR 6093, UNIV Angers, CNRS, SFR MathSTIC, France \\
\texttt{ycdong@fudan.edu.cn}

\vspace{0.5cm}

{\large  J. SPIELMANN} \\
\smallskip
LAREMA, UMR 6093, UNIV Angers, CNRS, SFR MathSTIC, France \\
\texttt{jerome.spielmann@univ-angers.fr (corresponding)}
\end{center} 

\vspace{0.2cm}

\begin{abstract}
We prove that a large class of discrete-time insurance surplus processes converge weakly to a generalized Ornstein-Uhlenbeck process, under a suitable re-normalization and when the time-step goes to $0$. Motivated by ruin theory, we use this result to obtain approximations for the moments, the ultimate ruin probability and the discounted penalty function of the discrete-time process.
\end{abstract}

\smallskip
\noindent MSC 2010 subject classifications: 60F17 (primary), 91B30, 60J60.

\smallskip
\noindent Keywords: Invariance principle, weak convergence, autoregressive process, stochastic recurrence equation, generalized Ornstein-Uhlenbeck process, ruin probability, first passage time.

\section{Introduction}

\footnotetext{This paper has been accepted for publication in {\it Insurance: Mathematics and Economics} and will be published in March 2020. It is already available online with the DOI : \url{https://doi.org/10.1016/j.insmatheco.2019.12.001}. (CC BY-NC-ND 2020)}

Let $(\xi_k)_{k \in \nN^*}$ and $(\rho_k)_{k \in \nN^*}$ be two i.i.d. and independent sequences of random variables, with $\rho_k > 0$ $(\pP-a.s.)$ for all $k \in \nN^*$. The \textit{autoregressive process of order 1 with random coefficients}, abbreviated RCA(1) or RCAR(1), see e.g. \cite{RCAR}, is given by 
\begin{equation}\label{eq_RCAR}
\theta_k = \xi_k + \theta_{k -1}\rho_k, k \in \nN^*.
\end{equation}
and $\theta_0 = y \in \rR$. Such processes, which are also called \textit{stochastic recurrence or difference equations}, appear frequently in applied probability. For example, it is suggested in \cite{andel1976} that RCA processes could be useful in problems related to hydrology, meteorology and biology. We also refer to \cite{vervaatappli} for a more exhaustive list of examples. In ruin theory, the RCA(1) process is a classic model for the surplus capital of an insurance company where $(\xi_k)_{k \in \nN^*}$ represents a stream of random payments or income and $(\rho_k)_{k \in \nN^*}$ represents the random rates of return from one period to the next, see for example \cite{nyrhinen1999}, \cite{nyrhinen2001}, \cite{nyrhinen2012} and \cite{tang2003}.

In this paper, we prove the convergence of the process (\ref{eq_RCAR}) when the time-step goes to $0$ and under a suitable re-normalization to the generalized Ornstein-Uhlenbeck (GOU) process given by
\begin{equation}\label{eq_GOU}
Y_t = e^{R_t}\left(y + \int_{0+}^t e^{-R_{s-}} dX_s\right), t \geq 0,
\end{equation}
where $R = (R_t)_{t \geq 0}$ and $X = (X_t)_{t \geq 0}$ are independent stable L\'{e}vy processes with drift. One of the main uses of weak convergence is to prove the convergence of certain functionals of the path of the processes to the functional of the limiting process and to use the value of the latter as an approximation for the former, when the steps between two payments and their absolute values are small. Motivated by ruin theory, we will use this technique to prove the convergence of the ultimate ruin probability, a simple form of the discounted penalty function and the moments.

In general, (\ref{eq_GOU}) is chosen as a model for insurance surplus processes with investment risk on an \textit{a priori} basis. The ruin problem is then studied under the heading "ruin problem with investment" for different choices of $R$ and $X$. We refer to \cite{paulsen2008} and the references therein for an overview of the relevant literature. The main convergence results of this paper could thus also be seen as a theoretical justification for the continuous-time model (\ref{eq_GOU}) in the context of models for insurance surplus processes with both insurance and market risks, in the same spirit as the results in \cite{duffieprotter1992}.

In actuarial mathematics, similar convergence results and approximations of functionals of surplus processes are a well-developed line of research. In \cite{iglehart1969} it is shown that the compound Poisson process with drift converges weakly to a Brownian motion with drift and it is shown that the finite-time and ultimate ruin probability converge to those of the limiting model. These results are extended to more general jump times in \cite{grandell1977} and to more general jump sizes in \cite{burn2000} and \cite{furrer1997}. Similar convergence results are proven for the integral of a deterministic function w.r.t. a compound Poisson process in \cite{harrison1977}, this corresponds to the assumption that the insurance company can invest at a deterministic interest rate. Some of the previous results are generalized in \cite{paulsen1997b}, where it is shown that a general model with a jump-diffusion surplus process and stochastic jump-diffusion investment converges to a particular diffusion process.

More closely related to our result are the papers \cite{cumberland1982} and \cite{dufresne1989}. In \cite{cumberland1982}, it is shown that the AR(1) process (i.e. when the coefficients $\rho_k$ are deterministic and constant) converges weakly to a standard Ornstein-Uhlenbeck process. In \cite{dufresne1989}, it is shown that when the variables $\xi_k$ are deterministic and satisfy some regularity conditions, we have a similar weak convergence result where the process $X$ in (\ref{eq_GOU}) is replaced by a deterministic function.

The results in \cite{duffieprotter1992} are also closely related. In that paper, the authors study the weak convergence of certain discrete-time models to continuous-time models appearing in mathematical finance and prove the convergence of the values of certain functionals such as the call option price. In particular, for the case $\xi_k = 0$, for all $k \in \nN^*$, they show, using the same re-normalization as we do (see below at the beginning of Section 2), that the discrete-time process (\ref{eq_RCAR})  converges to the Dol\'{e}ans-Dade exponential of a Brownian motion with drift. This generalizes the famous paper \cite{cox1979} where it is shown that the exponential of a simple random walk correctly re-normalized converges to the Black-Scholes model.

Finally, the relationship between the discrete-time process (\ref{eq_RCAR}) and (\ref{eq_GOU}) was also studied in \cite{dehaan1989}, where it is shown that GOU processes are continuous-time analogues of RCA(1) processes in some sense. More precisely, they show that any continuous-time process $S = (S_t)_{t \geq 0}$ for which the sequence $(S_{nh})_{n \in \nN^*}$ of the process sampled at rate $h > 0$ satisfies an equation of the form (\ref{eq_RCAR}), for all $h > 0$, with some additional conditions, is a GOU process of the form (\ref{eq_GOU}), where $X$ and $R$ are general L\'{e}vy processes. Our main result is coherent with this analogy but does not seem to be otherwise related.

The rest of the paper is structured as follows: after introducing the assumptions and notations, we prove the weak convergence of $(\ref{eq_RCAR})$ to $(\ref{eq_GOU})$ in Theorem 1. From this result, we deduce the convergence in distribution of the ruin times in Theorem \ref{cor_convRuinProb}. Then, we give sufficient conditions for the convergence of a simple form of the discounted penalty function in Theorem 3, of the ultimate ruin probability in Theorem 4 and of the moments in Theorem 5, when $\xi_1$ and $\ln(\rho_1)$ are both square-integrable. We illustrate these results using examples from actuarial theory and mathematical finance.

\section{Weak Limits of Autoregressive Processes and Convergence of the Ruin Times}

In this section, we show that the discrete-time process converges weakly to the GOU process and prove the convergence in distribution of the ruin times.

\subsection{Assumptions and Convergence Results}

We will use the following set of assumptions.

\begin{asm}[$\mathbf{H^{\alpha}}$]\rm
We say that a random variable $Z$ satisfies $(\mathbf{H^{\alpha}})$ if its distribution function satisfies
$$\pP(Z \leq -x) \sim k^Z_1 x^{-\alpha} \text{ and } \pP(Z \geq x) \sim k^Z_2 x^{-\alpha},$$
as $x \to \infty$, for some $1 < \alpha < 2$, where $k^Z_1, k^Z_2$ are constants such that $k^Z_1 + k^Z_2 > 0$. Note that this implies that $\eE(|Z|) < \infty$.
\end{asm}

\begin{asm}[$\mathbf{H^2}$]\rm
We say that a random variable $Z$ satisfies $(\mathbf{H^2})$ if $Z$ is square-integrable with $\mathrm{Var}(Z) > 0$, where $\mathrm{Var}(Z)$ is the variance of $Z$.
\end{asm}

We now introduce some notations and recall some classical facts about weak convergence on metric spaces, stable random variables and L\'{e}vy processes.

Recall that the space $D$ of c\`{a}dl\`{a}g functions $\rR_+ \to \rR$ can be equipped with the Skorokhod metric which makes it a complete and separable metric space, see e.g. Section VI.1, p.324 in \cite{js2003}. Let $\mathcal{D}$ be the Borel sigma-field for this topology. Given a sequence of random elements $Z^{(n)} : (\Omega^{(n)}, \mathcal{F}^{(n)}, \pP^{(n)}) \mapsto (D, \mathcal{D})$, with $n \geq 1$, we say that $(Z^{(n)})_{n \geq 1}$ converges weakly or in distribution to $Z : (\Omega, \mathcal{F}, \pP) \mapsto (D, \mathcal{D})$, if the laws of $Z^{(n)}$ converge weakly to the law of $Z$, when $n \to \infty$. We denote weak convergence by $Z^{(n)} \overset{d}{\to} Z$ and we use the same notation for the weak convergence of measures on $\rR$. We refer to Chapter VI, p.324 in \cite{js2003} for more information about these notions.

Concerning stable random variables $Z$ of index $\alpha$, the most common way to define them is trough their characteristic functions:
$$\eE(e^{iuZ}) = \exp[i\gamma u - c|u|^{\alpha}(1 - i\beta \mathrm{sign}(u)z(u, \alpha))],$$
where $\gamma \in \rR$, $c > 0$, $\alpha \in (0, 2]$, $\beta \in [-1, 1]$ and
$$z(u, \alpha) = 
\begin{cases}
\tan\left(\frac{\pi \alpha}{2}\right) \text{ if } \alpha \neq 1,\\
-\frac{2}{\pi}\ln|u| \text{ if } \alpha = 1.
\end{cases}
$$
Stable L\'{e}vy processes $(L_t)_{t \geq 0}$ are L\'{e}vy processes such that $L_t$ is equal in law to some stable random variable, for each $t \geq 0$, with fixed parameters $\beta \in [-1, 1]$ and $\gamma = 0$ (see e.g. Definition 2.4.7 p.93 in \cite{embrechts1997}.)

Finally, note that if $(Z_k)_{k \in \nN^*}$ is a sequence of i.i.d. random variables such that $Z_1$ satisfies either $(\mathbf{H^{\alpha}})$ or $(\mathbf{H^2})$, then there exists a stable random variable $K_{\alpha}$ and a constant $c_{\alpha} > 0$ such that
\begin{equation}\label{eq_convsum}
\sum_{k = 1}^n\frac{Z_k - \mu_Z}{c_{\alpha} n^{1/\alpha}} \overset{d}{\to} K_{\alpha},
\end{equation}
as $n \to \infty$, where $\mu_Z = \eE(Z_1)$. In fact, when $Z_1$ satisfies $(\mathbf{H^2})$, $\alpha = 2$, $c_{\alpha} = 1$ and $K_{\alpha}$ is the standard normal distribution with variance $\mathrm{Var}(Z_1)$. (See e.g. Section 2.2 p.70-81 in \cite{embrechts1997} for these facts.)

\begin{rem}\rm
The assumptions $(\mathbf{H^{\alpha}})$ and $(\mathbf{H^2})$ do not cover all possible cases. For example, the random variable $Z$ whose distribution function satisfies $\pP(Z \leq -x) \sim x^{-2}$, as $x \to \infty$, satisfies neither $(\mathbf{H^{\alpha}})$ nor $(\mathbf{H^2})$. In that case, the proofs of Theorems 1 and 2 below can still work. Then, the normalizing sequence $(c_{\alpha} n^{1/\alpha})_{n \in \nN^*}$ in (\ref{eq_convsum}) is replaced by $(S_{\alpha}(n) n^{1/\alpha})_{n \in \nN^*}$, where $S_{\alpha} : \rR_+^* \to \rR_+^*$ is a slowly varying function (see Theorem 2.2.15 in \cite{embrechts1997}) and the definitions of the sequences $(\xi^{(n)}_k)_{k \in \nN^*}$ and $(\rho^{(n)}_k)_{k \in \nN^*}$ below have to be adapted by replacing $c_{\alpha}$ by $S_{\alpha}(n)$. Moreover, to be able to obtain (\ref{eq_thm1_convsn}) in the proof of Theorem 1 below, we need an additional assumption ; for example, it is enough that $\lim_{x \to \infty} S_{\alpha}(x) > 0$ exists and is finite.
\end{rem}

We now turn to the presentation of the main assumptions and results of this section. 

\begin{asm}[$\mathbf{H}$]\rm
We assume that $(\xi_k)_{k \in \nN^*}$ and $(\rho_k)_{k \in \nN^*}$ are two i.i.d. and independent sequences of random variables, with $\rho_k > 0$ $(\pP-a.s.)$ for all $k \in \nN^*$, and such that $\xi_1$ (resp. $\ln(\rho_1)$) satisfies either $(\mathbf{H^{\alpha}})$ or $(\mathbf{H^2})$ (resp. $(\mathbf{H^{\beta}})$ or $(\mathbf{H^2})$.) We denote by $c_{\alpha}$ (resp. $c_{\beta}$) the constant and by $K_{\alpha}$ (resp. $K_{\beta}$) the limiting stable random variable appearing in $(\ref{eq_convsum})$. Denote by $(L^{\alpha}_t)_{t \geq 0}$ (resp. $(L^{\beta}_t)_{t \geq 0}$) the stable L\'{e}vy processes obtained by putting $L^{\alpha}_1 \overset{d}{=} K_{\alpha}$ (resp. $L^{\beta}_1 \overset{d}{=} K_{\beta}$).
\end{asm}

Fix $n \in \nN^*$, we want to divide the time interval into $n$ subintervals of length $1/n$ and update the discrete-time process at each time point of the subdivision. To formalize this, we define the following process
\begin{equation}\label{eq_AR}
\theta^{(n)}\left(\frac{k}{n}\right) = \xi^{(n)}_k + \theta^{(n)}\left(\frac{k-1}{n}\right)\rho^{(n)}_k, k \in \nN^*,
\end{equation}
where $(\xi^{(n)}_k)_{k \in \nN^*}$ and $(\rho^{(n)}_k)_{k \in \nN^*}$ have to be defined from the initial sequences. Following an idea in \cite{dufresne1989}, we let $\mu_{\xi} = \eE(\xi_1)$ and $\mu_{\rho} = \eE(\ln(\rho_1))$ and define:
$$\xi^{(n)}_k = \frac{\mu_{\xi}}{n} + \frac{\xi_k - \mu_{\xi}}{c_{\alpha}n^{1/\alpha}}$$
and $\rho^{(n)}_k = \exp(\gamma^{(n)}_k)$ where
$$\gamma^{(n)}_k = \frac{\mu_{\rho}}{n} + \frac{\ln(\rho_k) - \mu_{\rho}}{c_{\beta}n^{1/\beta}}.$$
These definitions ensure that 
$$\eE\left(\sum_{k = 1}^{n}\xi_k^{(n)}\right) = \mu_{\xi} \hspace{2mm} \text{ and } \hspace{2mm} \eE\left(\sum_{k = 1}^{n}\ln(\rho_k^{(n)})\right) = \mu_{\rho}.$$
Moreover, when $\xi_1$ and $\ln(\rho_1)$ both satisfy $(\mathbf{H^2})$, we choose $\alpha = \beta = 2$ and $c_{\alpha} = c_{\beta} = 1$, and then we have the following variance stabilizing property:
$$\mathrm{Var}\left(\sum_{k = 1}^{n}\xi_k^{(n)}\right) = \mathrm{Var}(\xi_1)\hspace{2mm} \text{ and } \hspace{2mm}  \mathrm{Var}\left(\sum_{k = 1}^{n}\ln(\rho_k^{(n)})\right) = \mathrm{Var}(\ln(\rho_1)).$$

Finally, we define the filtrations $\mathcal{F}^{(n)}_0 = \{\emptyset, \Omega\}$, $\mathcal{F}^{(n)}_k = \sigma((\xi^{(n)}_i, \rho^{(n)}_i), i = 1, \dots, k)$, $k \in \nN^*$ and $\mathcal{F}^{(n)}_t = \mathcal{F}^{(n)}_{[nt]}$, for $t \geq 0$, where $[.]$ is the floor function and define $\theta^{(n)}$ as the (continuous-time) stochastic process given by 
$$\theta^{(n)}_t = \theta^{(n)}\left(\frac{[nt]}{n}\right), t \geq 0.$$

\begin{thm}\label{thm_1}
Under $\mathbf{(H)}$, we have $\theta^{(n)} \overset{d}{\to} Y$, as $n \to \infty$, where $Y = (Y_t)_{t \geq 0}$ is the GOU process $(\ref{eq_GOU})$ with $X_t = \mu_{\xi} t + L^{\alpha}_t$ and $R_t = \mu_{\rho} t + L^{\beta}_t$, for all $t \geq 0$. In addition, $Y$ satisfies the following stochastic differential equation :
\begin{equation}\label{eq_SDE}
Y_t = y + X_t + \int_{0+}^tY_{s-} d\hat{R}_s, t \geq 0,
\end{equation}
where
$$\hat{R}_t = R_t + \frac{1}{2}\langle R^c \rangle_t + \sum_{0 < s \leq t}\left(e^{\Delta R_s} - 1 - \Delta R_s\right), t \geq 0,$$
and $R^c$ is the continuous martingale part of $R$ and $\Delta R_t$ is its jump at time $t \geq 0$.
\end{thm}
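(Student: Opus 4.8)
The plan is to solve the recurrence explicitly, pass to the limit via the stability theory for stochastic integrals, and then obtain the SDE by an It\^o computation.

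\emph{Step 1 (explicit solution).} Iterating (\ref{eq_AR}) and using $\rho^{(n)}_l = e^{\gamma^{(n)}_l}$ gives, for $t \geq 0$,
\[
\theta^{(n)}_t = e^{R^{(n)}_t}\Big(y + \sum_{j=1}^{[nt]}\xi^{(n)}_j\, e^{-R^{(n)}_{j/n}}\Big),\qquad R^{(n)}_t := \sum_{l=1}^{[nt]}\gamma^{(n)}_l = \frac{\mu_{\rho}[nt]}{n} + \frac{1}{c_{\beta}n^{1/\beta}}\sum_{l=1}^{[nt]}(\ln(\rho_l) - \mu_{\rho}),
\]
and, writing $X^{(n)}_t := \sum_{j=1}^{[nt]}\xi^{(n)}_j = \frac{\mu_{\xi}[nt]}{n} + \frac{1}{c_{\alpha}n^{1/\alpha}}\sum_{j=1}^{[nt]}(\xi_j - \mu_{\xi})$, the process $\theta^{(n)}$ is the image of the pair $(X^{(n)},R^{(n)})$ under a fixed measurable functional; it remains to pass to the limit in this functional and to identify the limit with $Y$.

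\emph{Step 2 (convergence of the drivers).} By the functional version of the convergence (\ref{eq_convsum}) (the classical functional stable limit theorem, valid under $(\mathbf{H^{\alpha}})$/$(\mathbf{H^2})$, resp. $(\mathbf{H^{\beta}})$/$(\mathbf{H^2})$), one has $X^{(n)} \overset{d}{\to} X$ and $R^{(n)} \overset{d}{\to} R$ in the Skorokhod space $D$, with $X,R$ as in the statement. Since $(\xi_k)$ and $(\rho_k)$ are independent and the limiting L\'evy processes $L^{\alpha},L^{\beta}$ are independent — hence a.s.\ never jump simultaneously — a standard argument upgrades this to joint convergence $(X^{(n)},R^{(n)}) \overset{d}{\to} (X,R)$ in $D(\rR_+,\rR^2)$; and since composition with $\exp$ is continuous on $D$, also $(X^{(n)},R^{(n)},e^{\pm R^{(n)}}) \overset{d}{\to} (X,R,e^{\pm R})$.

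\emph{Step 3 (convergence of $\theta^{(n)}$; the main obstacle).} Write $e^{-R^{(n)}_{j/n}} = e^{-R^{(n)}_{(j/n)-}}e^{-\gamma^{(n)}_j}$, so that
\[
\sum_{j=1}^{[nt]}\xi^{(n)}_j e^{-R^{(n)}_{j/n}} = \int_{0+}^t e^{-R^{(n)}_{s-}}\,dX^{(n)}_s + D^{(n)}_t,\qquad D^{(n)}_t := \sum_{j=1}^{[nt]}\xi^{(n)}_j e^{-R^{(n)}_{(j/n)-}}\big(e^{-\gamma^{(n)}_j}-1\big).
\]
One first shows $D^{(n)} \overset{d}{\to} 0$ in $D$: this is a direct estimate using the independence of the two sequences, $\eE|\xi^{(n)}_j| = O(n^{-1/\alpha})$ and $\eE|e^{-\gamma^{(n)}_j}-1| = O(n^{-1/\beta})$ for the bulk of the increments, together with the fact that on each $[0,T]$ only $O(1)$ indices carry a macroscopic $\gamma^{(n)}_j$ and, a.a.s., none of them coincides with a macroscopic $\xi^{(n)}_j$ (in the boundary case $\alpha=\beta=2$ one argues instead in $L^2$ after conditioning on $(\rho_k)$). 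For the integral term, $X^{(n)}$ is an $(\mathcal{F}^{(n)}_t)$-semimartingale, and a partial-sum process attracted to a stable law under the canonical normalization satisfies the ``good sequence''/uniform-tightness condition (the limit $X$ being a stable L\'evy process); since the integrands $e^{-R^{(n)}_-}$ are adapted and $(e^{-R^{(n)}},X^{(n)}) \overset{d}{\to} (e^{-R},X)$, the stability theorem for stochastic integrals (Kurtz--Protter; Jakubowski--M\'emin--Pag\`es) yields $\big(X^{(n)},R^{(n)},\int_{0+}^{\cdot} e^{-R^{(n)}_{s-}}dX^{(n)}_s\big) \overset{d}{\to} \big(X,R,\int_{0+}^{\cdot} e^{-R_{s-}}dX_s\big)$. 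The limiting factors $e^{R}$ and $\int_{0+}^{\cdot}e^{-R_{s-}}dX_s$ have no common jumps (they jump with $L^{\beta}$ and $L^{\alpha}$ respectively), so addition and multiplication are continuous at the limit; combining with $D^{(n)} \overset{d}{\to} 0$ and $e^{R^{(n)}} \overset{d}{\to} e^{R}$, the continuous mapping theorem gives $\theta^{(n)}_t \overset{d}{\to} e^{R_t}\big(y+\int_{0+}^t e^{-R_{s-}}dX_s\big) = Y_t$, i.e.\ (\ref{eq_GOU}). The delicate points — and what I expect to be the real work — are verifying the good-sequence/uniform-tightness condition for $X^{(n)}$ in the heavy-tailed regime $\alpha<2$ and controlling the discretisation error $D^{(n)}$ (including the boundary case). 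Alternatively, one may note that (\ref{eq_AR}) is exactly an Euler scheme for the SDE below, with drivers $X^{(n)}$ and $\hat R^{(n)}_t=\sum_{j\le[nt]}(\rho^{(n)}_j-1)$, and invoke the Kurtz--Protter stability theorem for SDEs once $(X^{(n)},\hat R^{(n)})\overset{d}{\to}(X,\hat R)$ is established; this delivers the convergence and the SDE form simultaneously.

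\emph{Step 4 (the SDE).} Finally, $e^{R_t} = \mathcal{E}(\hat R)_t$: indeed $\hat R^c = R^c$, $\langle\hat R^c\rangle = \langle R^c\rangle$ and $\Delta\hat R_s = e^{\Delta R_s}-1$, so $1+\Delta\hat R_s = e^{\Delta R_s}$ and the Dol\'eans--Dade formula collapses to $\exp(R_t)$; in particular $d e^{R_s} = e^{R_{s-}}\,d\hat R_s$. Writing $Y=UV$ with $U_t = y+\int_{0+}^t e^{-R_{s-}}\,dX_s$ and $V_t=e^{R_t}$, integration by parts gives $dY_s = U_{s-}\,dV_s + V_{s-}\,dU_s + d[U,V]_s = Y_{s-}\,d\hat R_s + dX_s$, since $V_{s-}e^{-R_{s-}}=1$ and $[U,V]\equiv 0$ (because $X$ and $R$ are independent, so $[X^c,R^c]=0$ and $X,R$ have no common jumps, which also annihilates the jump part of $[X,\hat R]$). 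With $Y_0=y$ this is exactly (\ref{eq_SDE}). (The integral in (\ref{eq_GOU}) is well defined since $s\mapsto e^{-R_{s-}}$ is c\`agl\`ad, hence locally bounded and predictable, and $X$ is a semimartingale.)
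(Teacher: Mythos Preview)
Your strategy coincides with the paper's: explicit solution, joint functional stable convergence of $(X^{(n)},R^{(n)})$ via independence, the UT condition for $X^{(n)}$, stability of stochastic integrals (the paper's Proposition~1 is Jacod--Shiryaev VI.6.22), continuous mapping, and then the SDE from $[X,R]=0$. Two remarks are worth making.

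First, the paper writes $\theta^{(n)}_t = e^{R^{(n)}_t}\bigl(y+\int_{0+}^t e^{-R^{(n)}_{s-}}dX^{(n)}_s\bigr)$ directly, \emph{without} your correction term $D^{(n)}$; your version is in fact the more accurate one (the explicit sum carries $e^{-R^{(n)}_{j/n}}$, not its left limit). Note however that $D^{(n)}_t = \sum_{j\le [nt]}\Delta X^{(n)}_{j/n}\,\Delta(e^{-R^{(n)}})_{j/n} = [X^{(n)},e^{-R^{(n)}}]_t$, so its vanishing is a byproduct of the same UT machinery (convergence of covariations) rather than of the moment estimate you sketch --- which is fortunate, since $\eE|e^{-\gamma^{(n)}_j}-1|$ need not even be finite under $(\mathbf{H}^{\beta})$.

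Second, the substantive content you flag and defer --- verifying UT for $X^{(n)}$ in the heavy-tailed regime --- is exactly where the paper spends its effort: it splits $X^{(n)} = \mu_{\xi}A^{(n)} + N^{(n)}$ into a bounded-variation part (trivially UT since $V(A^{(n)})_t\le t$) and a martingale part, and for $N^{(n)}$ uses the tail hypothesis $(\mathbf{H}^{\alpha})$ to show by direct computation that the deterministic sequence $s_n=\int_0^t\!\int_{\rR}|x|\mathbf{1}_{\{|x|>1\}}\nu^{(n)}(ds,dx)$ converges as $n\to\infty$, hence is bounded, which yields UT via a Jakubowski--M\'emin--Pag\`es criterion for local martingales. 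Your assertion that ``partial sums attracted to a stable law are automatically a good sequence'' is not a black-box result under $(\mathbf{H})$; this computation is the heart of the proof.
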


\begin{ex}[Pareto losses and stable log-returns]\rm
The assumption $\mathbf{(H^{\alpha})}$ is quite general and simple to check. To illustrate it we take the negative of a Pareto (type I) distribution with shape parameter $1 < \alpha < 2$ for the loss $\xi_1$, i.e. the random variable defined by its distribution function $F_{\xi}(x) = (-x)^{-\alpha}$, for $x \leq -1$. The condition on $\alpha$ ensures that $\xi_1$ has a finite first moment, but an infinite second moment. Moreover, $\xi_1$ then satisfies $\mathbf{(H^{\alpha}})$, with constants $k_1^{\xi} = 1$ and $k_2^{\xi} = 0$. We also have that $\mu_{\xi} = -\alpha/(\alpha - 1)$ and that
$$\sum_{k = 1}^n\frac{\xi_k - \mu_{\xi}}{c_{\alpha, \xi} n^{1/\alpha}} \overset{d}{\to} -K_{\alpha, \xi},$$
as $n \to \infty$, with $$c_{\alpha, \xi} = \frac{\pi}{2\Gamma(\alpha)\sin(\alpha \pi /2)},$$
where $\Gamma$ is the Gamma function and where $K_{\alpha, \xi}$ is a stable random variable of index $\alpha$, with $\gamma = 0$, $c = 1$ and $\beta = 1$ (see e.g. p.62 in \cite{stableBook}).

For the log-returns $\ln(\rho_1)$, we take a stable distribution with index $1 < \tilde{\alpha} < 2$, and parameters $\tilde{\gamma} = 0$, $\tilde{c} = 1$ and $\tilde{\beta} \in [-1, 1]$. Then, we have $\mu_{\rho} = 0$ and
$$\sum_{k = 1}^n\frac{\ln(\rho_k) - \mu_{\rho}}{c_{\tilde{\alpha}, \rho} n^{1/\tilde{\alpha}}} \overset{d}{\to} K_{\tilde{\alpha}, \rho},$$
as $n \to \infty$. Thus, Theorem \ref{thm_1} implies that $\theta^{(n)} \overset{d}{\to} Y$, as $n \to \infty$, where
$$Y_t = e^{R_t}\left(y + \int_{0+}^t e^{-R_{s-}} dX_s\right), t \geq 0,$$
with $X_t = \mu_{\xi}t + L^{\alpha}_t$ and $R_t = \mu_{\rho}t + L^{\tilde{\alpha}}_t$, where $L^{\alpha}$ and $L^{\tilde{\alpha}}$ are stable L\'{e}vy processes with $L^{\alpha}_1 \overset{d}{=} -K_{\alpha, \xi}$ and $L^{\tilde{\alpha}}_1 \overset{d}{=} K_{\tilde{\alpha}, \rho}$.
\end{ex}

As already mentioned, we will be interested in the application of Theorem \ref{thm_1} to ruin theory and we now state the main consequence for this line of study. Define the following stopping times, for $n \geq 1$,
$$\tau^n(y) = \inf\{t > 0 : \theta^{(n)}_t < 0\}$$ 
with the convention $\inf\{\emptyset\} = +\infty$, and also 
$$\tau(y) = \inf\{t > 0 : Y_t < 0\}.$$ 

\begin{thm}\label{cor_convRuinProb}
Assume that $\mathbf{(H)}$ holds. We have, for all $T \geq 0$,
$$\lim_{n \to \infty}\pP(\tau^n(y) \leq T) = \pP(\tau(y) \leq T)$$
and, equivalently, $\tau^n(y) \overset{d}{\to} \tau(y)$, as $n \to \infty$.
\end{thm}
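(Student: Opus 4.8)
The strategy is to realise $\tau^{n}(y)$ and $\tau(y)$ as images of the paths $\theta^{(n)}$ and $Y$ under the first-passage functional
$$\Phi\colon D\to[0,\infty],\qquad \Phi(\omega)=\inf\{t>0:\omega(t)<0\},$$
and to combine the weak convergence $\theta^{(n)}\overset{d}{\to}Y$ from Theorem \ref{thm_1} with the continuous mapping theorem. The map $\Phi$ is Borel measurable but not everywhere continuous; it is, however, continuous for the Skorokhod topology at each $\omega$ in
$$C=\Bigl\{\omega\in D:\ \omega(0)>0\ \text{ and }\ \min\bigl(\omega(s),\omega(s-)\bigr)>0\ \text{ for every }0\le s<\Phi(\omega)\Bigr\}$$
(with the convention $\omega(0-)=\omega(0)$). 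This is the usual continuity property of first-passage functionals: for $\omega\in C$ the path is bounded away from $0$ on every compact subinterval of $[0,\Phi(\omega))$, so $J_{1}$-nearby paths cannot enter $(-\infty,0)$ strictly earlier, while, when $\Phi(\omega)<\infty$, right-continuity together with the definition of $\Phi$ forces $\omega$ to take values strictly below $0$ at times arbitrarily close to and larger than $\Phi(\omega)$, so nearby paths enter no later. Hence the theorem follows once we prove that $\pP(Y\in C)=1$.

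Only $y>0$ requires argument ($y<0$ gives $\tau^{n}(y)=\tau(y)=0$, and $y=0$ is treated at the end). Set $Z_{t}=y+\int_{0+}^{t}e^{-R_{s-}}\,dX_{s}$, so that $Y_{t}=e^{R_{t}}Z_{t}$ with $e^{R_{t}}>0$ and $e^{R_{t-}}>0$; then the signs of $Y_{s}$ and $Y_{s-}$ agree with those of $Z_{s}$ and $Z_{s-}$, and $\tau(y)=\inf\{t>0:Z_{t}<0\}$, so that $\{Y\in C\}$ is precisely the event that $Z$ does not ``graze'' $0$, that is, $\min(Z_{s},Z_{s-})>0$ for all $0\le s<\tau(y)$. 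The driving process $X_{t}=\mu_{\xi}t+L^{\alpha}_{t}$ has the origin regular for $(-\infty,0)$: for $\alpha=2$ it is a Brownian motion with drift, while for $1<\alpha<2$ self-similarity yields $\liminf_{u\downarrow0}u^{-1}L^{\alpha}_{u}=-\infty$ almost surely, so the linear drift is negligible at small time scales and regularity reduces to that of the $\alpha$-stable L\'{e}vy process, valid for every index in $(1,2)$. Since $e^{-R_{s-}}$ is c\`{a}gl\`{a}d, strictly positive, and tends at each time to a strictly positive limit, $Z$ inherits this regularity near any time, and one obtains, for every $h>0$,
$$q(z,h):=\pP\bigl(\,Z\text{ started from }z\text{ stays in }[0,\infty)\text{ on }[0,h]\,\bigr)\longrightarrow0\qquad\text{as }z\downarrow0 .$$
Suppose, for contradiction, that $\pP(A)>0$ with $A=\{\exists\,s<\tau(y):\min(Z_{s},Z_{s-})=0\}$. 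On $A$ the time $\sigma=\inf\{s>0:\min(Z_{s},Z_{s-})\le0\}$ satisfies $\sigma<\tau(y)$, and since $Z_{s}>0$ and $Z_{s-}>0$ for $0<s<\sigma$ while $Z_{\sigma}\ge0$ (as $\sigma<\tau(y)$) and $\min(Z_{\sigma},Z_{\sigma-})\le0$, one gets $Z_{\sigma-}=0$. Fix $h>0$ and $\epsilon>0$ and let $\rho_{\epsilon}=\inf\{s>0:Z_{s}<\epsilon\}$; on $A$ one then has $\rho_{\epsilon}<\sigma$ and $Z_{\rho_{\epsilon}}\in[0,\epsilon]$, and on $A\cap\{\tau(y)>\sigma+h\}$ the path $Z$ stays in $[0,\infty)$ throughout $[\rho_{\epsilon},\rho_{\epsilon}+h]\subseteq[0,\tau(y))$. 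Conditioning at the stopping time $\rho_{\epsilon}$ and exploiting the independent increments of $X$, $\pP\bigl(A\cap\{\tau(y)>\sigma+h\}\bigr)\le\sup_{0\le z\le\epsilon}q(z,h)\to0$ as $\epsilon\downarrow0$, so $\pP\bigl(A\cap\{\tau(y)>\sigma+h\}\bigr)=0$; letting $h\downarrow0$ along a sequence and using $A\subseteq\{\tau(y)>\sigma\}$ gives $\pP(A)=0$, i.e. $\pP(Y\in C)=1$. (The same regularity yields $\tau(0)=0$ a.s., which, with $\tau^{n}(0)\to0$ in probability, settles $y=0$.)

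The continuous mapping theorem now gives $\Phi(\theta^{(n)})\overset{d}{\to}\Phi(Y)$ in $[0,\infty]$, i.e. $\tau^{n}(y)\overset{d}{\to}\tau(y)$. This is equivalent to $\lim_{n}\pP(\tau^{n}(y)\le T)=\pP(\tau(y)\le T)$ at every continuity point $T$ of the law of $\tau(y)$, so it remains to note that this distribution function is continuous on $[0,\infty)$: there is no atom at $0$ since $y>0$ forces $\tau(y)>0$ a.s., and no atom at a fixed $T>0$ since $Z$ a.s. has no jump at $T$ and $Z_{T}$ has an atomless law (conditionally on $R$ it is non-degenerate and infinitely divisible, hence absolutely continuous), so that $\pP(\tau(y)=T)\le\pP(Z_{T}\neq Z_{T-})+\pP(Z_{T}=0)=0$. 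Assembling these facts proves the stated equality for all $T\ge0$.

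I expect the \emph{main obstacle} to be the assertion $\pP(Y\in C)=1$, specifically the limit $q(z,h)\to0$ as $z\downarrow0$: transferring the regularity of $(-\infty,0)$ from the driving stable process to the stochastic integral $Z$ rigorously requires estimating the error in the local comparison of $Z$ with $z+e^{-R_{\rho_{\epsilon}}}(X_{\rho_{\epsilon}+\cdot}-X_{\rho_{\epsilon}})$, uniformly in $z$ and in the path of $R$, together with the accompanying applications of the strong Markov property. By comparison, the continuity of $\Phi$ on $C$ and the atomlessness of $\tau(y)$ are routine.
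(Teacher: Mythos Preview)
Your proposal is correct in outline, but the paper takes a noticeably different and lighter route.

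The paper does \emph{not} work with the first-passage functional $\Phi$ directly. Instead it uses the running-infimum functional $\omega\mapsto\inf_{0\le t\le T}\omega(t)$, which is globally continuous for the Skorokhod topology (so no discontinuity set has to be identified), together with the elementary inclusions
\[
\Bigl\{\inf_{[0,T]}\theta^{(n)}<0\Bigr\}\subseteq\{\tau^n(y)\le T\}\subseteq\Bigl\{\inf_{[0,T]}\theta^{(n)}\le 0\Bigr\},
\]
and the analogous ones for $Y$. The probabilistic input is then simply that $\pP\bigl(\inf_{[0,T]}Y=0\bigr)=0$, which the paper obtains by conditioning on $R$ and observing that, for each fixed rational time $t_i$, the integral $-\int_0^{t_i}g(s-)\,dX_s$ has an absolutely continuous law (Gaussian when $\xi_1$ satisfies $(\mathbf{H^2})$, infinite L\'evy measure under $(\mathbf{H^{\alpha}})$). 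The portmanteau theorem closes the argument.

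By contrast, your key step is $\pP(Y\in C)=1$, proved via regularity of $0$ for $(-\infty,0)$ for the driving stable process and its transfer to $Z$. This is a legitimate alternative, and your identification of the obstacle is accurate: making the estimate $q(z,h)\to 0$ uniform in the (random) environment $R$ and pushing the strong Markov property through the stochastic integral is genuine work, whereas the paper bypasses regularity entirely. Your final atomlessness step (no atom of $\tau(y)$ at a fixed $T$, via $\{Z_T=0\}\cup\{Z_T\neq Z_{T-}\}$) is essentially the same density argument the paper uses, just applied at a different place.

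In short: both approaches are valid; the paper's is more elementary because it trades the delicate continuity analysis of $\Phi$ and the regularity argument for the known $J_1$-continuity of the running infimum and a one-line density fact about the marginals. Your route is more direct conceptually but demands more machinery precisely where you flagged it.
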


Theorem \ref{cor_convRuinProb} implies the convergence of $\eE(f(\tau^n(y))$ to $\eE(f(\tau(y))$, for any continuous and bounded function $f : \rR_+ \to \rR$. For example, we can obtain the following convergence result for a simple form of the discounted penalty function.

\begin{coro}\label{cor_convDPF}
Assume that $\mathbf{(H)}$ holds. We have
$$\lim_{n \to \infty}\eE(e^{-\alpha \tau^n(y)}\mathbf{1}_{\{\tau^n(y) < +\infty\}}) = \eE(e^{-\alpha \tau(y)}\mathbf{1}_{\{\tau(y) < +\infty\}}),$$
for all $\alpha > 0$.
\end{coro}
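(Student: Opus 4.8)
The idea is to realise $e^{-\alpha\tau^{n}(y)}\mathbf{1}_{\{\tau^{n}(y)<+\infty\}}$ as $g(\tau^{n}(y))$ for a single bounded continuous function $g$ defined on the \emph{compactified} half-line $[0,+\infty]$, and then to push the convergence in distribution $\tau^{n}(y)\overset{d}{\to}\tau(y)$ from Theorem \ref{cor_convRuinProb} through $g$ by the portmanteau theorem. Concretely, for $\alpha>0$ set $g(t):=e^{-\alpha t}$ for $t\in[0,+\infty)$ and $g(+\infty):=0$. Since $\alpha>0$, $g$ is bounded and continuous on the compact metric space $[0,+\infty]$, and with the convention $e^{-\alpha\cdot(+\infty)}:=0$ we have $e^{-\alpha\tau^{n}(y)}\mathbf{1}_{\{\tau^{n}(y)<+\infty\}}=g(\tau^{n}(y))$ and $e^{-\alpha\tau(y)}\mathbf{1}_{\{\tau(y)<+\infty\}}=g(\tau(y))$; so it suffices to prove $\eE(g(\tau^{n}(y)))\to\eE(g(\tau(y)))$.

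Next I would upgrade the statement of Theorem \ref{cor_convRuinProb} to weak convergence on $[0,+\infty]$. Identify $[0,+\infty]$ with $[0,1]$ via the increasing homeomorphism $\phi(t):=1-e^{-t}$, $\phi(+\infty):=1$. For every $s\in[0,1)$ one has $\{\phi(\tau^{n}(y))\le s\}=\{\tau^{n}(y)\le\phi^{-1}(s)\}$ with $\phi^{-1}(s)=-\ln(1-s)\ge0$, so Theorem \ref{cor_convRuinProb} gives $\pP(\phi(\tau^{n}(y))\le s)=\pP(\tau^{n}(y)\le\phi^{-1}(s))\to\pP(\tau(y)\le\phi^{-1}(s))=\pP(\phi(\tau(y))\le s)$; the case $s=1$ is trivial. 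Hence the distribution functions of $\phi(\tau^{n}(y))$ converge pointwise on $[0,1]$ to that of $\phi(\tau(y))$, which is equivalent to $\phi(\tau^{n}(y))\overset{d}{\to}\phi(\tau(y))$ as random elements of $[0,1]$.

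To conclude, note that $g\circ\phi^{-1}$ (explicitly $s\mapsto(1-s)^{\alpha}$, with value $0$ at $s=1$) is bounded and continuous on $[0,1]$, so the portmanteau theorem applied to $\phi(\tau^{n}(y))\overset{d}{\to}\phi(\tau(y))$ yields $\eE(g(\tau^{n}(y)))=\eE\big((g\circ\phi^{-1})(\phi(\tau^{n}(y)))\big)\to\eE\big((g\circ\phi^{-1})(\phi(\tau(y)))\big)=\eE(g(\tau(y)))$, which is the claim. Alternatively, one can stay on $\rR_+$ and use the convergence recorded just after Theorem \ref{cor_convRuinProb}: for $M>0$ pick a compactly supported continuous $g_{M}$ with $g_{M}=e^{-\alpha\cdot}$ on $[0,M]$ and $0\le g_{M}\le e^{-\alpha M}$ off $[0,M]$ (e.g.\ $g_M(t)=e^{-\alpha M}(M+1-t)^+$ for $t\ge M$); then $\big|\eE(e^{-\alpha\tau^{n}(y)}\mathbf{1}_{\{\tau^{n}(y)<+\infty\}})-\eE(g_{M}(\tau^{n}(y)))\big|\le 2e^{-\alpha M}$ uniformly in $n$, and likewise for $\tau(y)$, while $\eE(g_{M}(\tau^{n}(y)))\to\eE(g_{M}(\tau(y)))$; letting $M\to+\infty$ finishes.

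The only genuinely delicate point — hence the main obstacle — is the event $\{\tau(y)=+\infty\}$ (ruin never occurring), which carries positive mass in general, so one must make sure that probability escaping to infinity is counted correctly rather than lost. This is precisely why it is convenient to work on the compactification $[0,+\infty]$ and to exploit that $e^{-\alpha t}\to0$ as $t\to+\infty$, so that the integrand extends continuously through the point at infinity with the value $0$; everything else is a routine application of the portmanteau theorem and of Theorem \ref{cor_convRuinProb}.
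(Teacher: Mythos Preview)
Your proof is correct and follows the same route the paper indicates: the paper does not give a separate proof of this corollary but simply remarks, just before its statement, that Theorem~\ref{cor_convRuinProb} ``implies the convergence of $\eE(f(\tau^n(y)))$ to $\eE(f(\tau(y)))$, for any continuous and bounded function $f:\rR_+\to\rR$'', and then applies this with $f(t)=e^{-\alpha t}$. Your argument is a careful spelling-out of exactly this idea; the only addition is that you explicitly handle the mass at $+\infty$ by passing to the compactification $[0,+\infty]$ (equivalently, to $[0,1]$ via $\phi$), which the paper leaves implicit.
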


When $\xi_1$ and $\ln(\rho_1)$ both satisfy $(\mathbf{H^2})$, the limiting stable random variable is, in fact, the standard normal random variable and the limiting process is defined by two independent Brownian motions with drift.

\begin{coro}[Pure diffusion limit]\label{thm_2}
Assume that $\xi_1$ and $\ln(\rho_1)$ both satisfy $(\mathbf{H^2})$, then $\theta^{(n)} \overset{d}{\to} Y$, as $n \to \infty$, for $Y = (Y_t)_{t \geq 0}$ defined by $(\ref{eq_GOU})$ with $R_t = \mu_{\rho} t + \sigma_{\rho} W_t$ and $X_t = \mu_{\xi} t + \sigma_{\xi} \tilde{W}_t$, for all $t \geq 0$, where $(W_t)_{t\geq0}$ and $(\tilde{W}_t)_{t\geq0}$ are two independent standard Brownian motions and $\sigma_{\xi}^2 = \mathrm{Var}(\xi_1)$ and $\sigma_{\rho}^2 = \mathrm{Var}(\ln(\rho_1))$.
\end{coro}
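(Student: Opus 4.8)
The plan is to obtain this corollary directly from Theorem \ref{thm_1} by specializing the index and the normalizing constant in Assumption $\mathbf{(H)}$. First I would record the consequences of assuming that $\xi_1$ and $\ln(\rho_1)$ both satisfy $(\mathbf{H^2})$: by the discussion following $(\ref{eq_convsum})$, one may then take $\alpha = \beta = 2$ and $c_{\alpha} = c_{\beta} = 1$, and the limiting stable variables are centered Gaussian, namely $K_{\alpha} \overset{d}{=} \mathcal{N}(0, \mathrm{Var}(\xi_1)) = \sigma_{\xi}\,\mathcal{N}(0,1)$ and $K_{\beta} \overset{d}{=} \mathcal{N}(0, \mathrm{Var}(\ln(\rho_1))) = \sigma_{\rho}\,\mathcal{N}(0,1)$, with $\sigma_{\xi}^2 = \mathrm{Var}(\xi_1)$ and $\sigma_{\rho}^2 = \mathrm{Var}(\ln(\rho_1))$. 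With this choice the re-normalized increments $\xi^{(n)}_k$ and $\gamma^{(n)}_k$ defining $\theta^{(n)}$ in $(\ref{eq_AR})$ are exactly the variance-stabilizing ones displayed just before Theorem \ref{thm_1}, so Assumption $\mathbf{(H)}$ is met and Theorem \ref{thm_1} applies.

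The only point that deserves a word of justification, rather than a computation, is the identification of the stable L\'evy processes $L^{\alpha}$ and $L^{\beta}$ of Theorem \ref{thm_1} in this case. A L\'evy process is determined in law by the distribution of its value at time $1$; the process $(L^{\beta}_t)_{t \geq 0}$ is defined by $L^{\beta}_1 \overset{d}{=} K_{\beta} \overset{d}{=} \mathcal{N}(0, \sigma_{\rho}^2)$, and since by the convention adopted in the excerpt its characteristic triplet has zero drift ($\gamma = 0$) and hence zero L\'evy measure and Gaussian part $\sigma_{\rho}^2$, it coincides in law with $(\sigma_{\rho} W_t)_{t \geq 0}$ for a standard Brownian motion $W$. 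Likewise $(L^{\alpha}_t)_{t \geq 0} \overset{d}{=} (\sigma_{\xi} \tilde{W}_t)_{t \geq 0}$ for a standard Brownian motion $\tilde{W}$, and the independence of the sequences $(\xi_k)_{k \in \nN^*}$ and $(\rho_k)_{k \in \nN^*}$ passes to the independence of $\tilde{W}$ and $W$ in the limit, as in the conclusion of Theorem \ref{thm_1}.

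Substituting these identifications into the statement of Theorem \ref{thm_1} then gives $\theta^{(n)} \overset{d}{\to} Y$, where $Y$ is the GOU process $(\ref{eq_GOU})$ with $X_t = \mu_{\xi} t + L^{\alpha}_t = \mu_{\xi} t + \sigma_{\xi} \tilde{W}_t$ and $R_t = \mu_{\rho} t + L^{\beta}_t = \mu_{\rho} t + \sigma_{\rho} W_t$, which is precisely the asserted limit. As a complementary remark one may note that here $\Delta R_s \equiv 0$, so the jump sum in the definition of $\hat{R}$ vanishes and $\langle R^c\rangle_t = \sigma_{\rho}^2 t$; thus $(\ref{eq_SDE})$ reduces to the linear SDE $Y_t = y + X_t + \int_{0+}^t Y_{s-}\,\bigl(dR_s + \tfrac{1}{2}\sigma_{\rho}^2\, ds\bigr)$ driven by the two Brownian motions.

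I do not anticipate a genuine obstacle: the corollary is essentially a dictionary translation of Theorem \ref{thm_1}, and the sole non-formal ingredient is the elementary fact that a driftless stable L\'evy process with a Gaussian time-$1$ marginal is a scaled Brownian motion, which is immediate from the L\'evy--Khintchine representation together with the uniqueness of the law of a L\'evy process given its one-dimensional time-$1$ distribution.
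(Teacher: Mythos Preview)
Your proposal is correct and follows exactly the paper's approach: the corollary is stated without a separate proof, being an immediate specialization of Theorem~\ref{thm_1} once one notes (as the paper does just before stating the corollary) that under $(\mathbf{H^2})$ the limiting stable variable is Gaussian and hence the limiting stable L\'evy process is a scaled Brownian motion. Your write-up in fact supplies more detail than the paper, including the identification of $\hat{R}$ in the continuous case, and there is no gap.
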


\begin{ex}[Pareto losses and NIG log-returns]\label{ex_1par}\rm
To illustrate $\mathbf{(H^2)}$ we take again the negative of a Pareto (type I) distribution for the loss $\xi_1$ but with shape parameter $\alpha > 2$, so that the distribution admits also a second moment. For the log-returns, $\ln(\rho_1)$ we take the normal inverse gaussian $\mathrm{NIG}(\alpha, \beta, \delta, \mu)$ with parameters $0 \leq |\beta| < \alpha$, $\delta > 0$ and $\mu \in \rR$, i.e. the random variable defined by the following moment generating function
\begin{equation}\label{eq_momgen_NIG}
\eE(e^{u\ln(\rho_1)}) = \exp\left(\mu u + \delta\left(\lambda - \sqrt{\alpha^2 - (\beta + u)^2}\right)\right),
\end{equation}
where $\lambda = \sqrt{\alpha^2 - \beta^2}$, for all $u \in \rR$.

Then, it is well known that 
$$\mu_{\xi} = -\frac{\alpha}{\alpha - 1}, \hspace{3mm} \sigma_{\xi}^2 = \frac{\alpha}{(\alpha - 1)^2(\alpha -2)}$$
and that
$$\mu_{\rho} = \mu + \frac{\beta\delta}{\lambda}, \hspace{3mm} \sigma_{\rho}^2 = \delta\frac{\alpha^2}{\lambda^3}.$$
Thus, in this case, Corollary \ref{thm_2} yields $\theta^{(n)} \overset{d}{\to} Y$, with 
\begin{equation*}
Y_t = e^{\mu_{\rho}t+\sigma_{\rho}W_t}\left(y + \int_{0+}^t e^{-\mu_{\rho}s-\sigma_{\rho}W_s} d(\mu_{\xi}s+\sigma_{\xi}\tilde{W}_s)\right), t \geq 0,
\end{equation*}
and where $(W_t)_{t \geq 0}$ and $(\tilde{W}_t)_{t \geq 0}$ are two independent standard Brownian motions.
\end{ex}

\subsection{The UT Condition and the Proofs of Theorems \ref{thm_1} and \ref{cor_convRuinProb}}

We now turn to the proofs of the Theorems. The strategy is to rewrite the discrete-time process as a stochastic integral and to use the well-known weak convergence result for stochastic integrals based on the UT (uniform tightness) condition for semimartingales.

To rewrite the discrete-time process, note that, by induction, the explicit solution of (\ref{eq_AR}), for all $n \in \nN^*$ and $k \in \nN^*$, is given by
\begin{equation*}
\begin{split}
\theta^{(n)}\left(\frac{k}{n}\right) & = y \prod_{i = 1}^k \rho^{(n)}_i + \sum_{i = 1}^k \xi^{(n)}_i \prod_{j = i + 1}^k \rho^{(n)}_j \\
& = \prod_{i = 1}^k \rho^{(n)}_i\left(y + \sum_{i = 1}^k \xi^{(n)}_i \prod_{j = 1}^i (\rho^{(n)}_j)^{-1}\right),
\end{split}
\end{equation*}
where, by convention, we set $\prod_{j = k + 1}^k \rho^{(n)}_j = 1$, for all $n \in \nN^*$. Thus,
\begin{equation}\label{eq_discreteExplicit}
\theta^{(n)}_t = \prod_{i = 1}^{[nt]} \rho^{(n)}_i\left(y + \sum_{i = 1}^{[nt]} \xi^{(n)}_i \prod_{j = 1}^i (\rho^{(n)}_j)^{-1}\right).
\end{equation}
and setting $X^{(n)}_t = \sum_{i = 1}^{[nt]}\xi^{(n)}_i$ and $R^{(n)}_t = \sum_{i = 1}^{[nt]}\gamma^{(n)}_i$, we obtain
\begin{equation}\label{eq_GOUtheta}
\theta^{(n)}_t = e^{R^{(n)}_t}\left(y + \int_{0+}^{t}e^{-R^{(n)}_{s-}}dX^{(n)}_s\right).
\end{equation}
In fact, the above rewriting of the discrete-time process will prove very useful for most proofs in this paper.

\begin{rem}\rm
An other way to prove the weak convergence would be to remark that since $[X^{(n)}, R^{(n)}]_t = 0$, for all $n \in \nN^*$, we find that $\theta^{(n)}$ satisfies the following stochastic differential equation :
$$\theta^{(n)}_t = y + X^{(n)}_{t} + \int_{0+}^t\theta^{(n)}_{s-}d\hat{R}^{(n)}_s,$$
where 
$$\hat{R}^{(n)}_t = R^{(n)} + \sum_{0 < s \leq t}(e^{\Delta R^{(n)}_s} - 1 - \Delta R^{(n)}_s) = \sum_{i = 1}^{[nt]}(e^{\gamma^{(n)}_i} - 1),$$
and to use the well-known stability results for the solutions of stochastic differential equations. We refer to \cite{duffieprotter1992} for an interesting application of this method for different models in mathematical finance. However, this way seems harder, in our case, since the process $(\hat{R}^{(n)}_t)_{t \geq 0}$ is less explicit than $(R^{(n)}_t)_{t \geq 0}$.
\end{rem}

We now recall the UT condition, the weak convergence result and give two lemmas to check the condition in our case.

\begin{defi}
Consider a sequence of real-valued semimartingales $Z^{(n)}$ defined on $(\Omega^{(n)}, \mathcal{F}^{(n)}, (\mathcal{F}^{(n)}_t)_{t \geq 0}, \pP^{(n)})$, for each $n \in \nN^*$. Denote by $\mathcal{H}^{(n)}$ the set given by
\begin{equation*}
\begin{split}
\mathcal{H}^{(n)} = \{H^{(n)} | & H^{(n)}_t = L^{n, 0} + \sum_{i = 1}^p L^{n, i}\mathbf{1}_{[t_i, t_{i+1})}(t), p \in \nN, \\
& 0 = t_0 < t_1 < \dots < t_p = t, \\
& L^{n, i} \text{ is } \mathcal{F}^{(n)}_{t_i}-\text{measurable} \text{ with } |L^{n, i}| \leq 1\}.
\end{split}
\end{equation*}
The sequence $(Z^{(n)})_{n \in \nN^*}$ is UT (also called P-UT in \cite{js2003}, for "uniformly tight" and "predictably uniformly tight") if for all $t > 0$, for all $\epsilon > 0$, there exists $M > 0$ such that, 
$$\sup_{H^{(n)} \in \mathcal{H}^{(n)}, n \in \nN^*}\pP^{(n)} \left(\left|\int_{0+}^tH^{(n)}_{s-}dZ^{(n)}_s\right| > M\right) < \epsilon.$$
\end{defi}

For more information about the UT condition see Section VI.6 in \cite{js2003}. One of the interesting consequences of the UT condition is given by the following proposition which is a particular case of Theorem 6.22 p.383 of \cite{js2003}.

\begin{prop}\label{prop_UTConv}
Let $(H^{(n)}, Z^{(n)})_{n \in \nN^*}$ be a sequence of real-valued semimartingales defined on $(\Omega^{(n)}, \mathcal{F}^{(n)}, (\mathcal{F}^{(n)}_t)_{t \geq 0}, \pP^{(n)})$. If $(H^{(n)}, Z^{(n)}) \overset{d}{\to} (H, Z)$ as $n \to \infty$ and the sequence $(Z^{(n)})_{n \in \nN^*}$ is UT, then $Z$ is a semimartingale and when $n \to \infty$,
$$\left(H^{(n)}, Z^{(n)}, \int_0^.H^{(n)}_{s-}dZ^{(n)}_s\right) \overset{d}{\to} \left(H, Z, \int_0^.H_{s-}dZ_s\right).$$
\end{prop}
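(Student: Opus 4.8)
The plan is to obtain Proposition~\ref{prop_UTConv} as a direct specialization of Theorem~6.22 of \cite{js2003} rather than by a new argument, so the real work is matching hypotheses. First I would record that the family $\mathcal{H}^{(n)}$ of elementary predictable processes bounded by $1$ appearing in the Definition above is exactly the test class used to define the P-UT property in Section~VI.6 of \cite{js2003}; hence the UT condition as stated here is literally the P-UT hypothesis of Theorem~6.22 in the scalar case, and no reformulation is needed. I would also note that, since each $H^{(n)}$ is adapted with \cadlag paths, the process $H^{(n)}_-$ is left-continuous and adapted, hence predictable and locally bounded, so that $\int_{0+}^{\cdot} H^{(n)}_{s-}\, dZ^{(n)}_s$ is a well-defined semimartingale for every $n$; the same remark applies to the limit once $Z$ is known to be a semimartingale.

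Next I would pin down the convergence hypothesis: the statement $(H^{(n)}, Z^{(n)}) \overset{d}{\to} (H, Z)$ must be read as weak convergence on the Skorokhod space of $\rR^2$-valued \cadlag functions, which is genuinely stronger than the conjunction of the two one-dimensional convergences (the joint Skorokhod topology forces the jump times of the two coordinates to move together). This is the form in which the hypothesis will actually be checked in the applications, so the match with Theorem~6.22 is exact. Granting it, that theorem applies verbatim and yields at once that $Z$ is a semimartingale for the (suitably augmented) filtration generated by $(H,Z)$ and that $(H^{(n)}, Z^{(n)}, H^{(n)}_- \cdot Z^{(n)}) \overset{d}{\to} (H, Z, H_- \cdot Z)$, which is the assertion.

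Because the result is quoted from the literature there is no genuine obstacle; the only point deserving attention is the bookkeeping around the limiting filtration --- Theorem~6.22 produces the limiting semimartingale relative to the filtration of the pair $(H,Z)$ after the usual completion and right-continuity augmentation, so when this Proposition is later invoked one must ensure the integrand in play is adapted to that filtration. For completeness one could recall the mechanism behind Theorem~6.22: P-UT of $(Z^{(n)})$ forces tightness of the stochastic integrals, one extracts a convergent subsequence and passes to almost-sure convergence via the Skorokhod representation theorem, and the P-UT bound controls the error made when $H^{(n)}_-$ is replaced by a simple predictable approximation drawn from $\mathcal{H}^{(n)}$, after which the elementary pathwise convergence of Riemann--Stieltjes sums identifies the limit as $H_- \cdot Z$; but carrying this out would only reproduce \cite{js2003} and is unnecessary here.
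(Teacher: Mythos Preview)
Your proposal is correct and matches the paper's approach exactly: the paper does not prove this proposition at all but simply states it as ``a particular case of Theorem 6.22 p.383 of \cite{js2003}''. Your additional remarks on matching the hypotheses (identifying the UT condition with P-UT, the joint Skorokhod convergence, and the filtration bookkeeping) are accurate and go beyond what the paper records, but the underlying argument is the same direct citation.
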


The following lemma is based on Remark 6.6 p.377 in \cite{js2003}.

\begin{lem}\label{lem_UT1}
Let $(Z^{(n)})_{n \in \nN^*}$ be a sequence of real-valued semimartingales with locally bounded variation defined on $(\Omega^{(n)}, \mathcal{F}^{(n)}, (\mathcal{F}^{(n)}_t)_{t \geq 0}, \pP^{(n)})$. If for each $t > 0$ and each $\epsilon > 0$, there exists $M > 0$ such that 
$$\sup_{n \in \nN^*}\pP^{(n)}\left(V(Z^{(n)})_t > M\right) < \epsilon,$$ 
where $V(.)$ denotes the total first order variation of a process, then $(Z^{(n)})_{n \geq 1}$ is UT.
\end{lem}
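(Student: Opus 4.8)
The plan is to bound, uniformly in $n$ and over all predictable integrands $H^{(n)} \in \mathcal{H}^{(n)}$ with $|H^{(n)}| \leq 1$, the tail probability of $\left|\int_{0+}^t H^{(n)}_{s-} dZ^{(n)}_s\right|$ by the tail probability of $V(Z^{(n)})_t$, and then invoke the hypothesis. The key elementary observation is the pathwise inequality: since $Z^{(n)}$ has locally bounded variation, the stochastic integral $\int_{0+}^t H^{(n)}_{s-} dZ^{(n)}_s$ coincides with a (pathwise) Lebesgue–Stieltjes integral, and hence
$$\left|\int_{0+}^t H^{(n)}_{s-} dZ^{(n)}_s\right| \leq \int_{0+}^t |H^{(n)}_{s-}|\, d V(Z^{(n)})_s \leq V(Z^{(n)})_t,$$
using $|H^{(n)}_{s-}| \leq 1$ in the last step. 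This holds for every $H^{(n)} \in \mathcal{H}^{(n)}$ simultaneously, with the same right-hand side.

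With this in hand, the rest is immediate. Fix $t > 0$ and $\epsilon > 0$. By hypothesis there is $M > 0$ with $\sup_{n} \pP^{(n)}(V(Z^{(n)})_t > M) < \epsilon$. Then for every $n \in \nN^*$ and every $H^{(n)} \in \mathcal{H}^{(n)}$,
$$\pP^{(n)}\left(\left|\int_{0+}^t H^{(n)}_{s-} dZ^{(n)}_s\right| > M\right) \leq \pP^{(n)}\left(V(Z^{(n)})_t > M\right) < \epsilon,$$
so taking the supremum over $H^{(n)} \in \mathcal{H}^{(n)}$ and $n \in \nN^*$ yields exactly the defining inequality of the UT condition. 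Hence $(Z^{(n)})_{n \geq 1}$ is UT.

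The only genuinely non-routine point — and the one I would be careful to state precisely — is the first step: one must justify that for a semimartingale of locally bounded variation the stochastic integral against a bounded predictable simple integrand of the form appearing in $\mathcal{H}^{(n)}$ agrees almost surely with the pathwise Stieltjes integral, so that the pathwise bound by the total variation is legitimate. For integrands in $\mathcal{H}^{(n)}$, which are finite sums of the form $L^{n,0} + \sum_{i=1}^p L^{n,i}\mathbf{1}_{[t_i,t_{i+1})}$, the stochastic integral is by definition the telescoping sum $\sum_i L^{n,i}(Z^{(n)}_{t_{i+1}\wedge t} - Z^{(n)}_{t_i\wedge t})$, and this manifestly equals the corresponding Stieltjes sum; the triangle inequality together with $\sum_i |Z^{(n)}_{t_{i+1}\wedge t} - Z^{(n)}_{t_i\wedge t}| \leq V(Z^{(n)})_t$ then gives the bound directly, avoiding any passage to the limit. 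This is exactly the content of Remark 6.6 p.377 in \cite{js2003}.
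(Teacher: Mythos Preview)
Your proposal is correct and follows essentially the same route as the paper: bound the elementary stochastic integral pathwise by the total variation of $Z^{(n)}$ via the explicit telescoping sum for simple integrands, then apply the tightness hypothesis on $V(Z^{(n)})_t$. The paper's proof is identical in spirit, obtaining the bound $1 + V(Z^{(n)})_t$ (the extra $1$ coming from the constant term $L^{n,0}$ in the definition of $\mathcal{H}^{(n)}$, which you absorb directly into the variation bound); either way the conclusion is immediate.
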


\begin{proof}
For each $n \in \nN^*$, $H^{(n)} \in \mathcal{H}^{(n)}$ and $t > 0$, we find $p \in \nN$ and $0 = t_0 < t_1 < \dots < t_p = t$ such that
\begin{equation*}
\begin{split}
\left|\int_{0+}^tH^{(n)}_{s-}dZ^{(n)}_s\right| & \leq |L^{n, 0}| + \sum_{i = 1}^p|L^{n,i}||Z_{t_{i + 1}} - Z_{t_i}| \leq 1 + \sum_{i = 1}^p|Z_{t_{i + 1}} - Z_{t_i}| \\
& \leq 1 + V(Z^{(n)})_t.
\end{split}
\end{equation*}
Thus, the assumption implies the UT property.
\end{proof}

The following lemma is based on Remark 2-1 in \cite{memin1991}.

\begin{lem}\label{lem_UT2}
Let $(Z^{(n)})_{n \in \nN^*}$ be a sequence of real-valued local martingales defined on $(\Omega^{(n)}, \mathcal{F}^{(n)}, (\mathcal{F}^{(n)}_t)_{t \geq 0}, \pP^{(n)})$ and $Z$ a real-valued semimartingale on  $(\Omega, \mathcal{F}, (\mathcal{F}_t)_{t \geq 0}, \pP)$. Denote by $\nu^{(n)}$ the compensator of the jump measure of $Z^{(n)}$. If $Z^{(n)} \overset{d}{\to} Z$ as $n \to \infty$, then the following conditions are equivalent:
\begin{enumerate}
\item[(i)] $(Z^{(n)})_{n \in \nN^*}$ is UT,
\item[(ii)] for each $t > 0$ and each $\epsilon > 0$, there exists $a, M > 0$ such that
$$\sup_{n \geq 1}\pP^{(n)}\left(\int_0^t \int_{\rR}|x|\mathbf{1}_{\{|x| > a\}}  \nu^{(n)}(ds, dx) > M\right) < \epsilon.$$ 
\end{enumerate}
\end{lem}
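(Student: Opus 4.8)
The plan is to deduce the equivalence from the classical ``large–jump'' decomposition of a local martingale, together with Lemma~\ref{lem_UT1} and the characterization of the UT property in terms of predictable characteristics in Section~VI.6 of \cite{js2003}; this is, in substance, Remark~2-1 of \cite{memin1991} rephrased in our notation, and I will only describe the mechanism. Fix $a>0$ and, for each $n$, write $Z^{(n)}=N^{(n),a}+M^{(n),a}$, where
\[
N^{(n),a}_t=\sum_{0<s\le t}\Delta Z^{(n)}_s\mathbf{1}_{\{|\Delta Z^{(n)}_s|>a\}}-\int_0^t\!\int_{\rR}x\mathbf{1}_{\{|x|>a\}}\nu^{(n)}(ds,dx)
\]
is the compensated sum of the jumps of $Z^{(n)}$ exceeding $a$ --- a local martingale of locally finite variation whose associated increasing compensator is exactly the integral $\int_0^t\!\int|x|\mathbf{1}_{\{|x|>a\}}\nu^{(n)}$ appearing in (ii) --- and $M^{(n),a}=Z^{(n)}-N^{(n),a}$ is a local martingale whose jumps are bounded by $a$ (in general by $a$ plus the jumps of that compensator, which vanish when $Z^{(n)}$ is quasi-left-continuous, as in the applications of this paper, and are removed by a further elementary step otherwise).

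For the direction (ii)$\Rightarrow$(i): by Lenglart's domination inequality, condition (ii) makes the increasing process $\sum_{s\le t}|\Delta Z^{(n)}_s|\mathbf{1}_{\{|\Delta Z^{(n)}_s|>a\}}$, hence also the total variation $V(N^{(n),a})_t$, bounded in probability uniformly in $n$, so Lemma~\ref{lem_UT1} shows that $(N^{(n),a})_n$ is UT. Being UT, it is tight in $D$; since $(Z^{(n)})_n$ converges in law, along any subsequence the difference $(M^{(n),a})_n$ is then a convergent sequence of local martingales with uniformly bounded jumps, and such a sequence is UT --- this is the step where bounded jumps and convergence in law are combined to force the bracket processes $[M^{(n),a}]_t$ to be bounded in probability, which is equivalent to the UT property for locally square-integrable martingales. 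As the UT property is stable under sums, $(Z^{(n)})_n$ is UT. For the converse (i)$\Rightarrow$(ii): the characterization of UT for local martingales in \cite{js2003} shows that UT of $(Z^{(n)})_n$ forces the quantities $\int_0^t\!\int(|x|^2\wedge1)\nu^{(n)}$ and $\int_0^t\!\int|x|\mathbf{1}_{\{|x|>1\}}\nu^{(n)}$ to be bounded in probability uniformly in $n$, and the elementary inequality $|x|\mathbf{1}_{\{|x|>a\}}\le(|x|^2\wedge1)/(a\wedge1)+|x|\mathbf{1}_{\{|x|>1\}}$ then yields (ii).

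I expect the main obstacle to be the passage, needed in both directions, between an adapted increasing process and its predictable compensator while keeping control only \emph{in probability} (rather than in $L^1$): this is precisely where the standing hypothesis $Z^{(n)}\overset{d}{\to}Z$ enters, through the tightness of the largest jump $\sup_{s\le t}|\Delta Z^{(n)}_s|$ that it supplies, allowing a localization that makes the relevant jumps bounded. The second, more technical, point is the assertion used in (ii)$\Rightarrow$(i) that a convergent sequence of local martingales with uniformly bounded jumps is UT, whose proof rests on the tightness of the associated bracket processes; and the extra care required to secure the bound $|\Delta M^{(n),a}|\le a$ when the $Z^{(n)}$ have fixed times of discontinuity is a routine but non-empty verification.
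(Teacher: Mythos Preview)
Your approach reconstructs the equivalence by hand via the large-jump decomposition, whereas the paper's proof is considerably shorter: it invokes Lemma~3.1 of \cite{jakubowski1989}, which already states that, under the standing hypothesis $Z^{(n)}\overset{d}{\to}Z$, the UT property for a sequence of local martingales is equivalent to the total variation $V(B^{a,n})_t$ of the first semimartingale characteristic (for the truncation $h(x)=x\mathbf{1}_{\{|x|>a\}}$) being bounded in probability uniformly in $n$. The rest of the paper's argument is a two-line computation identifying $B^{a,n}_t=\int_0^t\int x\mathbf{1}_{\{|x|>a\}}\nu^{(n)}(ds,dx)$ when $Z^{(n)}$ is a local martingale, so that $V(B^{a,n})_t$ is exactly the integral in~(ii). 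Your (i)$\Rightarrow$(ii) direction is essentially this same route.

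In your (ii)$\Rightarrow$(i) direction, however, there is a genuine gap. From tightness of $(N^{(n),a})_n$ and convergence of $(Z^{(n)})_n$ you conclude that along subsequences $(M^{(n),a})_n=(Z^{(n)}-N^{(n),a})_n$ is a convergent sequence. But subtraction is not continuous on $D\times D$ for the Skorokhod topology, and nothing here rules out common jump times in the subsequential limits of $Z^{(n)}$ and $N^{(n),a}$; indeed their jumps occur at exactly the same times for each finite $n$. Hence neither joint tightness of $(Z^{(n)},N^{(n),a})$ nor convergence of their difference is automatic, and you cannot invoke the ``convergent local martingales with bounded jumps are UT'' fact for $M^{(n),a}$ without further work. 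The cleanest repair is precisely the paper's shortcut: skip the decomposition and apply the characteristic-based criterion of \cite{jakubowski1989} directly.
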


\begin{proof}
From Lemma 3.1. in \cite{jakubowski1989} we know that, under the assumption $Z^{(n)} \overset{d}{\to} Z$ as $n \to \infty$, (i) is equivalent to asking that for each $t > 0$ and each $\epsilon > 0$, there exists $M > 0$ such that 
$$\sup_{n \geq 1}\pP^{(n)}(V(B^{a, n})_t > M) < \epsilon,$$ 
where $V(.)$ is the total first order variation of a process and $B^{a, n}$ is the first semimartingale characteristic of $Z^{(n)}$ (for the truncation function $h(x) = x\mathbf{1}_{\{|x| > a\}}$).

Let's compute $V(B^{a, n})$ in this case. For $a > 0$ and $n \in \nN^*$, define $\tilde{Z}^{n, a}_t = Z^{(n)}_t -  \sum_{0 < s \leq t}\Delta Z_s \mathbf{1}_{\{|\Delta Z_s| > a\}}$ and $B^{a, n}_t = \int_0^t\int_{\rR}x\mathbf{1}_{\{|x| > a\}}\nu^{(n)}(ds, dx).$ We have,
\begin{equation*}
\begin{split}
\tilde{Z}^{n, a}_t & = \tilde{Z}^{n, a}_t + B^{a, n}_t - B^{a, n}_t  \\
& = Z^{(n)}_t - \int_0^t\int_{\rR}x\mathbf{1}_{\{|x| > a\}}(\mu^{(n)}(ds, dx) - \nu^{(n)}(ds, dx)) - B^{a, n}_t,
\end{split}
\end{equation*}
where $\mu^{(n)}$ is the jump measure of $Z^{(n)}$. Thus, since the two first terms on the r.h.s. of the last line above are local martingales, their difference is a local martingale with bounded jumps and thus the first semimartingale characteristic of $Z^{(n)}$ is $B^{a, n}_t$.  So, 
$$V(B^{a, n})_t = \int_0^t\int_{\rR}|x|\mathbf{1}_{\{|x| > a\}}\nu^{(n)}(ds, dx)$$ 
and this finishes the proof.
\end{proof}

We are now ready for the proof of Theorem \ref{thm_1}.

\begin{proof}[Proof of Theorem \ref{thm_1}]
To be able to apply Proposition \ref{prop_UTConv}, we need show that $(e^{R^{(n)}}, X^{(n)})_{n \in \nN^*}$ converges in law as $n \to \infty$ and that $(X^{(n)})_{n \in \nN^*}$ is UT.

First, note that by definition of $\gamma_k^{(n)}$, we have 
\begin{equation}\label{eq_decomposeR}
R^{(n)}_t = \sum_{i = 1}^{[nt]}\gamma_k^{(n)} = \mu_{\rho}\frac{[nt]}{n} + \sum_{i = 1}^{[nt]}\frac{\ln(\rho_i) - \mu_{\rho}}{c_{\beta}n^{1/\beta}}.
\end{equation}
But $[nt]/n \to t$ as $n \to \infty$. By the stable functional convergence theorem (see e.g. Theorem 2.4.10 p.95 in \cite{embrechts1997}), the sum in the r.h.s. of the equation above converges weakly to a stable L\'{e}vy process $(L^{\beta}_t)_{t \geq 0}$ with $L^{\beta}_1 \overset{d}{=} K_{\beta}$. Thus, we obtain 
$$(e^{-R^{(n)}_t})_{t \geq 0} = \left(\exp\left(-\sum_{i = 1}^{[nt]}\gamma_k^{(n)}\right)\right)_{t \geq 0} \overset{d}{\to} \left(e^{- \mu_{\rho} t - L^{\beta}_t}\right)_{t \geq 0}.$$ 

Similarly, by the definition of $\xi^{(n)}_i$, we have
\begin{equation}\label{eq_decomposeX}
X^{(n)}_t = \sum_{i = 1}^{[nt]} \frac{\mu_{\xi}}{n} + \sum_{i = 1}^{[nt]} \frac{\xi_i - \mu_{\xi}}{c_{\alpha}n^{1/\alpha}} = \mu_{\xi} A^{(n)}_t + N^{(n)}_t, \text{ for all } t \geq 0.
\end{equation}
Applying the stable functional convergence theorem again, we obtain $(N^{(n)}_t)_{t \geq 0} \overset{d}{\to} (L^{\alpha}_t)_{t \geq 0}$, as $n \to \infty$, where $L^{\alpha}$ is a stable L\'{e}vy motion, with $L^{\alpha}_1 \overset{d}{=} K_{\alpha}$, which is independent of $(L^{\beta}_t)_{t \geq 0}$ since the sequences $(\xi_k)_{k \in \nN^*}$ and $(\rho_k)_{k \in \nN^*}$ are independent. Using the independence, we also have the convergence of the couple $(e^{R^{(n)}}, X^{(n)})$, as $n \to \infty$.

To prove that $(X^{(n)})_{n \in \nN^*}$ is UT, it is enough to prove that $(A^{(n)})_{n \in \nN^*}$ and $(N^{(n)})_{n \in \nN^*}$ are both UT. Note that $A^{(n)}$ is a process of locally bounded variation for each $n \geq 1$ with $V(A^{(n)}) = A^{(n)}$. Since $A^{(n)}_t \leq t$, for all $n \in \nN^*$, we have
$$\sup_{n \geq 1}\pP(A^{(n)}_t > M) \leq \pP(t > M),$$
for all $M > 0$ and thus, by Lemma \ref{lem_UT1}, the sequence $(A^{(n)})_{n \in \nN^*}$ is UT.

Now, note that, when $t > s$ and $[nt] \geq [ns]+1$, using the i.i.d. property of $(\xi_k)_{k \in \nN^*}$ we obtain
$$\eE(N^{(n)}_t - N^{(n)}_s|\mathcal{F}_s) = \sum_{i = [ns]+1}^{[nt]}\eE\left(\frac{\xi_i - \mu_{\xi}}{c_{\alpha}n^{1/\alpha}}\right) = 0.$$
When $t > s$ and $[nt] < [ns]+1$, $N^{(n)}_t - N^{(n)}_s = 0$, and thus $\eE(N^{(n)}_t - N^{(n)}_s|\mathcal{F}_s) = 0$. This shows that $N^{(n)}$ is a local martingale for each $n \in \nN^*$. Then, denoting by $\nu^{(n)}$ the compensator of the jump measure of $N^{(n)}$ (which is deterministic since $N^{(n)}$ is also a semimartingale with independent increments), we set 
$$s_n = \int_0^t \int_{\rR}|x|\mathbf{1}_{\{|x| > 1\}}  \nu^{(n)}(ds, dx),$$
for each $n \in \nN^*$, and we will show that the (deterministic) sequence $(s_n)_{n \in \nN^*}$ converges (and thus is bounded).

First, we have
\begin{equation*}
\begin{split}
& \int_0^t \int_{\rR}|x|\mathbf{1}_{\{|x| > 1\}}  \nu^{(n)}(ds, dx) = \eE\left(\sum_{0 < s \leq t}|\Delta N^{(n)}_s| \mathbf{1}_{\{|\Delta N^{(n)}| \geq 1\}}\right) \\
& = \sum_{i = 1}^{[nt]} \eE\left(\left|\frac{\xi_i - \mu_{\xi}}{c_{\alpha}n^{1/\alpha}}\right| \mathbf{1}_{\left\{\left|\frac{\xi_i - \mu_{\xi}}{c_{\alpha}n^{1/\alpha}}\right| \geq 1\right\}}\right) \\
& = \frac{[nt]}{c_{\alpha}n^{1/\alpha}}\eE\left(\left|\xi_1 - \mu_{\xi}\right| \mathbf{1}_{\{\left|\xi_1 - \mu_{\xi}\right| \geq c_{\alpha}n^{1/\alpha}\}}\right). 
\end{split}
\end{equation*}
To compute the expectation on the r.h.s., note that for any non-negative random variable $Z$ and constant $a \geq 0$ we have
\begin{equation*}
\begin{split}
\eE(Z\mathbf{1}_{\{Z \geq a\}}) & = \eE\left(\int_{0}^Z \mathbf{1}_{\{Z \geq a\}} dx\right) = \eE\left(\int_{0}^{\infty} \mathbf{1}_{\{Z \geq x \vee a\}} dx\right) \\
& = \int_0^{\infty}\pP(Z \geq x \vee a)dx \\
& = a \pP(Z \geq a) + \int_a^{\infty}\pP(Z \geq x)dx.
\end{split}
\end{equation*}
Thus,
\begin{equation*}
\begin{split}
s_n & = \frac{[nt]}{c_{\alpha}n^{1/\alpha}}\eE\left(\left(\xi_1 - \mu_{\xi}\right) \mathbf{1}_{\{\left(\xi_1 - \mu_{\xi}\right) \geq c_{\alpha}n^{1/\alpha}\}}\right) \\
& + \frac{[nt]}{c_{\alpha}n^{1/\alpha}}\eE\left(-\left(\xi_1 - \mu_{\xi}\right) \mathbf{1}_{\{-\left(\xi_1 - \mu_{\xi}\right) \geq c_{\alpha}n^{1/\alpha}\}}\right) \\
& = [nt]\pP\left(\xi_1 \geq \mu_{\xi} + c_{\alpha}n^{1/\alpha}\right) + \frac{[nt]}{c_{\alpha}n^{1/\alpha}} \int_{c_{\alpha}n^{1/\alpha}}^{\infty}\pP\left(\xi_1 \geq \mu_{\xi} + x\right)dx \\
& + [nt]\pP\left(\xi_1 \leq \mu_{\xi} - c_{\alpha}n^{1/\alpha}\right) + \frac{[nt]}{c_{\alpha}n^{1/\alpha}} \int_{c_{\alpha}n^{1/\alpha}}^{\infty}\pP\left(\xi_1 \leq \mu_{\xi} - x\right)dx.
\end{split}
\end{equation*}
Using the fact that $\xi_1$ satisfies $(\mathbf{H^{\alpha}})$, we see that $\pP\left(\xi_1 \leq \mu_{\xi} - c_{\alpha}x^{1/\alpha}\right) \sim k_1^{\xi_1} c_{\alpha}^{-\alpha}x^{-1}$ and $\pP\left(\xi_1 \geq \mu_{\xi} + c_{\alpha}x^{1/\alpha}\right) \sim k_2^{\xi_1}c_{\alpha}^{-\alpha}x^{-1}$, as $x \to \infty$. So,
\begin{equation}\label{eq_thm1_convsn}
\begin{split}
\lim_{n \to \infty}s_n & = \lim_{n \to \infty}\frac{k_1^{\xi_1}}{c_{\alpha}^{\alpha}}\frac{[nt]}{n} - \lim_{n \to \infty}\frac{[nt]}{c_{\alpha}n^{1/\alpha}}\frac{k_1^{\xi_1}c_{\alpha}^{1-\alpha}n^{(1-\alpha)/\alpha}}{1-\alpha} \\
& + \lim_{n \to \infty}\frac{k_2^{\xi_1}}{c_{\alpha}^{\alpha}}\frac{[nt]}{n} - \lim_{n \to \infty}\frac{[nt]}{c_{\alpha}n^{1/\alpha}}\frac{k_2^{\xi_1}c_{\alpha}^{1-\alpha}n^{(1-\alpha)/\alpha}}{1-\alpha} \\
& = \frac{k_1^{\xi_1} + k_2^{\xi_1}}{c_{\alpha}^{\alpha}}\frac{\alpha}{\alpha - 1} t.
\end{split}
\end{equation}

Thus, the sequence is bounded and taking $M > 0$ large enough, we find $\sup_{n \geq 1}\pP(s_n > M) < \epsilon$, for each $\epsilon > 0$, and, by Lemma \ref{lem_UT2}, we have then shown that the sequence $(N^{(n)})_{n \in \nN^*}$ is UT.

To conclude we obtain, using Proposition \ref{prop_UTConv} and the continuous mapping theorem with $h(x_1, x_2, x_3) = (x_3 + y)/x_2$, $(\theta^{(n)}_t)_{t \geq 0} \overset{d}{\to} (Y_t)_{t \geq 0}$ where $Y = (Y_t)_{t \geq 0}$ is given by (\ref{eq_GOU}) with $R_t = \mu_{\rho} t + L^{\beta}_t$, $X_t = \mu_{\xi} t + L^{\alpha}_t$, for all $t \geq 0$.

In this case, we have $[R, X]_t = 0$, for all $t \geq 0$, (see e.g. Theorem 33 and its proof p.301-302 in \cite{protterSDE}) and thus, using It\^{o}'s lemma and Theorem II.8.10 p.136 in \cite{js2003}, we obtain the stochastic differential equation (\ref{eq_SDE}).
\end{proof}

\begin{proof}[Proof of Theorem \ref{cor_convRuinProb}]
We start by proving that $\pP(\inf_{0 \leq t \leq T}Y_t = 0) = 0$. First, note that
$$\left\{\inf_{0 \leq t \leq T}Y_t = 0\right\} = \left\{\sup_{0 \leq t \leq T}\left(-\int_{0+}^te^{-R_{s-}}dX_s\right) = y\right\}.$$
Using the independence of the processes, we then obtain
$$\pP\left(\inf_{0 \leq t \leq T}Y_t = 0\right) = \int_{D}\pP\left(\sup_{0 \leq t \leq T}\left(-\int_{0+}^tg(s-)dX_s\right) = y\right)\pP_{e^{-R}}(dg)$$
where $D$ is the space of \cadlag functions and $\pP_{e^{-R}}$ is the law of the process $(e^{-R_t})_{t \geq 0}$. Denote $S(g)_t = -\int_{0+}^tg(s-)dX_s$, for all $t \geq 0$.

Let $(t_i)_{i \in \nN^*}$ be an enumerating sequence of $[0, T] \cap \qQ$. Since $S(g) = (S(g)_t)_{t \geq 0}$ is a process with independent increments, $S(g)$ has, for each fixed time $t_i > 0$, the same law as a L\'{e}vy process $L = (L_t)_{t \geq 0}$ defined by the characteristic triplet $(a_L, \sigma_L^2, \nu_L)$ with
$$a_L = \frac{\mu_{\xi}}{t_i}\int_0^{t_i}g(s-)ds, \hspace{2mm} \sigma_L^2 = \frac{\sigma_{\xi}^2}{t_i}\int_0^{t_i}g^2(s-)ds$$
and
$$\nu_L(dx) = \frac{\nu_{\xi}(dx)}{t_i}\int_{0}^{t_i}g(s-)ds,$$ 
where $(a_{\xi}, \sigma^2_{\xi}, \nu_{\xi})$ is the characteristic triplet of $X$, see Theorem 4.25 p.110 in \cite{js2003}. Then, it is well known that $L_{t_i}$ admits a density if $\sigma_L^2 > 0$ or $\nu_L(\rR) = \infty$, see e.g. Proposition 3.12 p.90 in \cite{conttankov}. But, when $\xi_1$ satisfies $\mathbf{(H^2)}$, we have $\sigma_{\xi}^2 > 0$ and $\sigma_L^2 > 0$. When $\xi_1$ satisfies $\mathbf{(H^{\alpha})}$, we have $\nu_{\xi}(\rR) = \infty$ and $\nu_L(\rR) = \infty$. Thus, in both cases, $L_{t_i}$ admits a density and we have $\pP(S(g)_{t_i} = y) = \pP(L_{t_i} = y) = 0$.

Since $(S(g)_t)_{t \geq 0}$ is \cadlag we have
$$\sup_{0 \leq t \leq T}S(g)_t = \sup_{t \in [0, T] \cap \qQ}S(g)_t,$$
and, since a \cadlag process reaches its supremum almost surely,
\begin{equation*}
\begin{split}
\pP\left(\sup_{0 \leq t \leq T}S(g)_t = y\right) & = \pP\left(\sup_{t \in [0, T] \cap \qQ}S(g)_t = y\right) \leq \pP\left(\bigcup_{i \in \nN}\{S_{t_i} = y\}\right) \\
& = \lim_{N \to \infty}\pP\left(\bigcup_{i = 1}^N\{S_{t_i} = y\}\right) \leq \lim_{N \to \infty}\sum_{i = 1}^N\pP(S_{t_i} = y) \\
& = 0.
\end{split}
\end{equation*}
Thus, $\pP(\inf_{0 \leq t \leq T}Y_t = 0) = 0$.

Next, note that we have
$$\left\{\inf_{0 \leq t \leq T}Y_t < 0\right\} \subseteq \{\tau(y) \leq T\} \subseteq \left\{\inf_{0 \leq t \leq T}Y_t \leq 0\right\}$$
and 
$$\left\{\inf_{0 \leq t \leq T}\theta^{(n)}_t < 0\right\} \subseteq \{\tau^n(y) \leq T\} \subseteq \left\{\inf_{0 \leq t \leq T}\theta^{(n)}_t \leq 0\right\}.$$
Since $\theta^{(n)} \overset{d}{\to} Y$ by Theorem \ref{thm_1}, we obtain from the continuous mapping theorem that $\inf_{0 \leq t \leq T}\theta^{(n)}_t \overset{d}{\to} \inf_{0 \leq t \leq T}Y_t$, for all $T \geq 0$, since the supremum (and also the infimum) up to a fixed time are continuous for the Skorokhod topology (see e.g. Proposition 2.4, p.339, in \cite{js2003}). So, by the portmanteau theorem,
\begin{equation*}
\begin{split}
\limsup_{n \to \infty}\pP(\tau^n(y) \leq T) & \leq \limsup_{n \to \infty}\pP\left(\inf_{0 \leq t \leq T}\theta^{(n)}_t \leq 0\right) \\
&  \leq \pP\left(\inf_{0 \leq t \leq T}Y_t \leq 0\right) = \pP\left(\inf_{0 \leq t \leq T}Y_t < 0\right) \\
& = \pP(\tau(y) \leq T),
\end{split}
\end{equation*}
and 
\begin{equation*}
\begin{split}
\liminf_{n \to \infty}\pP(\tau^n(y) \leq T) & \geq \liminf_{n \to \infty}\pP\left(\inf_{0 \leq t \leq T}\theta^{(n)}_t < 0\right) \\
& \geq \pP\left(\inf_{0 \leq t \leq T}Y_t < 0\right) = \pP\left(\inf_{0 \leq t \leq T}Y_t \leq 0\right) \\
& = \pP(\tau(y) \leq T).
\end{split}
\end{equation*}
\end{proof}

\section{Convergence and Approximation of the Ruin Functionals in the Pure Diffusion Case}

In this section, we obtain sufficient conditions for the convergence of a simple form of the discounted penalty function, the ultimate ruin probability and the moments and give a manner to approximate these quantities. To be able to go further (and to obtain practical expressions for the ruin functionals of the limiting process), we now restrict ourselves to the $\mathbf{(H^2)}$ case.

\begin{asm}[$\mathbf{H'}$]\rm
We assume that $\xi_1$ and $\ln(\rho_1)$ both satisfy $(\mathbf{H^2})$. So $Y$ is given by (\ref{eq_GOU}) with $X_t = \mu_{\xi}t + \sigma_{\xi}\tilde{W}_t$ and $R_t = \mu_{\rho}t + \sigma_{\rho}W_t$ or, equivalently, is given by the solution of $(\ref{eq_SDE})$ with the same $X$ and $\hat{R}_t = \kappa_{\rho} t + \sigma_{\rho} W_t$ and $\kappa_{\rho} = \mu_{\rho} + \sigma_{\rho}^2/2$. 
\end{asm}

\subsection{Approximation of the Discounted Penalty Function}

We have seen in Corollary \ref{cor_convDPF} that a simple form of the discounted penalty function converges. In this section, we give an expression of this quantity for the limiting process which will depend on the solution of a second order ODE.

\begin{lem}\label{lem_ODE}
Let $\alpha > 0$. The equation
\begin{equation}\label{eq_ODE}
(\sigma_{\xi}^2 + \sigma_{\rho}^2 x^2)f_{\alpha}''(x) + 2(\mu_{\xi} + \kappa_{\rho} x) f_{\alpha}'(x) - 2 \alpha f_{\alpha}(x) = 0,
\end{equation}
admits a solution $f_{\alpha} : \rR_+ \to \rR$ satisfying 
\begin{itemize}
\item[(P)] $f_{\alpha}(x) > 0$, for all $x \in \rR_+$, and $f'_{\alpha}(x) \leq 0$, for all $x \in (0, +\infty)$.
\end{itemize} 
Moreover, if $\mu_{\rho} \leq 0$, every other solution $\tilde{f}_{\alpha}$ of (\ref{eq_ODE}) satisfying (P) is given by $\tilde{f}_{\alpha}(x) = Kf_{\alpha}(x)$, for all $x \in \rR_+$, for some constant $K \in \rR_+^*$.
\end{lem}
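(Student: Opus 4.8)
The plan is to read (\ref{eq_ODE}), after dividing by $2$, as the eigenvalue-type equation $\mathcal{A}f_\alpha=\alpha f_\alpha$ for the generator $\mathcal{A}g(x)=\tfrac12(\sigma_\xi^2+\sigma_\rho^2x^2)\,g''(x)+(\mu_\xi+\kappa_\rho x)\,g'(x)$ of the diffusion $Y$ of Assumption $\mathbf{(H')}$. Since the leading coefficient $a(x):=\sigma_\xi^2+\sigma_\rho^2x^2$ is bounded below by $\sigma_\xi^2>0$, (\ref{eq_ODE}) is a regular linear second-order ODE on all of $\rR$: its solutions form a two-dimensional space, every initial value problem has a unique $C^\infty$ solution, and solutions depend continuously on their data. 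I will use these facts together with the scale-density substitution $\phi:=f'_\alpha/s$, where $s(x):=\exp\bigl(-\int_0^x\tfrac{2(\mu_\xi+\kappa_\rho u)}{a(u)}\,du\bigr)$; a direct computation rewrites (\ref{eq_ODE}) as the first-order pair $f'_\alpha=s\phi$, $\phi'=\tfrac{2\alpha}{a\,s}f_\alpha$. The point is that $\tfrac{2\alpha}{a\,s}>0$, so $\phi$ is strictly increasing on any interval where $f_\alpha>0$, which drives the whole argument. (Alternatively one could take $f_\alpha(x)=\eE_x[e^{-\alpha\tau(x)}\ind_{\{\tau(x)<\infty\}}]$ and verify (\ref{eq_ODE}) and (P) by standard diffusion theory; I follow the self-contained analytic route.)

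For existence I would shoot in the initial slope. Normalising $s(0)=1$, for $c\le0$ let $f^c$ be the solution with $f^c(0)=1$, $(f^c)'(0)=c$; differentiating in $c$ shows $c\mapsto f^c(x)$ is strictly increasing for each $x>0$. One checks that $f^0$ leaves $1$ strictly increasing (its $\phi$ starts at $0$ and becomes positive at once), hence stays positive on all of $\rR_+$, whereas for $c$ sufficiently negative $f^c$ decreases fast enough to reach $0$ in finite time. Set $c^*:=\inf\{c\le0:f^c>0\text{ on }\rR_+\}\in(-\infty,0)$. Using monotonicity in $c$ and continuous dependence, $f^{c^*}$ is still strictly positive (a zero would be an interior minimum, forcing $f^{c^*}\equiv0$ by uniqueness of the initial value problem), and its $\phi$ never reaches $0$: if it did, $f^{c^*}$ would turn upward and escape to $+\infty$, and by continuous dependence so would $f^c$ for $c$ slightly below $c^*$, contradicting that such $f^c$ reach $0$. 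Hence $\phi<0$ throughout, so $f_\alpha:=f^{c^*}$ is strictly positive and strictly decreasing on $(0,\infty)$, i.e. (P) holds.

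For uniqueness, let $\tilde f_\alpha$ satisfy (P); after rescaling assume $\tilde f_\alpha(0)=f_\alpha(0)$. Being positive and non-increasing, $\tilde f_\alpha$ is bounded and converges to some $L\ge0$ as $x\to\infty$, and its $\phi$ is non-positive, increasing, hence converges to some $\ell\le0$. Here the hypothesis $\mu_\rho\le0$ enters: since $s(x)\sim\mathrm{const}\cdot x^{-(1+2\mu_\rho/\sigma_\rho^2)}$, the scale density is non-integrable at $+\infty$ precisely when $\mu_\rho\le0$; if $\ell<0$ this forces $\tilde f_\alpha(x)\le\tilde f_\alpha(0)+\ell\int_0^x s\to-\infty$, impossible, so $\ell=0$, and a parallel estimate on $\phi(x)=-\int_x^\infty\tfrac{2\alpha}{a\,s}\tilde f_\alpha$ excludes $L>0$, so $L=0$. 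Finally $h:=\tilde f_\alpha-f_\alpha$ solves (\ref{eq_ODE}) with $h(0)=0$ and $h(x)\to0$; if $h'(0)\ne0$ the same $\phi$-monotonicity argument makes $h$ eventually strictly monotone and bounded away from $0$, contradicting $h(x)\to0$, so $h'(0)=0$ and $h\equiv0$. Thus every solution of (\ref{eq_ODE}) satisfying (P) equals $Kf_\alpha$ with $K=\tilde f_\alpha(0)/f_\alpha(0)>0$.

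The delicate points are, for existence, making the threshold argument for $c^*$ airtight — one must combine the monotonicity of $c\mapsto f^c$, continuous dependence on the data, and the sign dichotomy for $\phi$ to simultaneously exclude ``$f^{c^*}$ reaches $0$'' and ``$f^{c^*}$ turns upward'' — and, for uniqueness, the behaviour at $+\infty$, which is exactly why the statement is restricted to $\mu_\rho\le0$: when $\mu_\rho>0$ the scale density becomes integrable, the clean estimate above breaks down, and ruling out a strictly positive limit $L$ would require a finer asymptotic analysis of (\ref{eq_ODE}) near infinity that this lemma avoids.
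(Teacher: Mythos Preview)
Your approach is genuinely different from the paper's. The paper rewrites (\ref{eq_ODE}) in Sturm--Liouville form $(p f_\alpha')' + pg f_\alpha = 0$ with $p(x) = 1/s(x)$ (your scale density) and $g(x) = -2\alpha/a(x) < 0$, and then simply cites Hartman's ODE textbook: Corollary~6.4, p.~357 for the existence of a positive non-increasing (principal) solution, and Exercise~6.7, p.~358 together with the verification that $\int_1^\infty p(x)^{-1}\,dx = \infty$ for uniqueness up to a scalar. Your argument is a self-contained re-derivation of exactly these facts: the shooting construction is how one builds the principal solution, and your uniqueness argument replays the mechanism behind Hartman's exercise, namely that $\int^\infty s = \infty$ (equivalently $\int^\infty 1/p = \infty$) forces any two bounded positive solutions to coincide. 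The trade-off is clear --- the paper's proof is a few lines but opaque without the references, while yours explains why $\mu_\rho \le 0$ is the right hypothesis.

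There is, however, a gap in your uniqueness step. The ``parallel estimate'' you invoke to rule out $L > 0$ does not work as stated when $\mu_\rho < 0$: in that regime $\tfrac{1}{a\,s}(x) \sim \mathrm{const}\cdot x^{-1+2\mu_\rho/\sigma_\rho^2}$ is \emph{integrable} at $+\infty$, so the bound $|\phi(x)| \ge 2\alpha L \int_x^\infty \tfrac{1}{a\,s}$ yields no contradiction. One can repair this by iterating once (substitute the expression for $\phi$ back into $\tilde f_\alpha(x) - L = \int_x^\infty s|\phi|$ and apply Fubini; the resulting integrand behaves like $v^{-1}$ and the double integral diverges). But there is a cleaner fix that bypasses $L = 0$ entirely: in your final step, the $\phi$-monotonicity argument for $h = \tilde f_\alpha - f_\alpha$ actually gives more than ``bounded away from $0$''. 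If $h(0) = 0$ and $h'(0) > 0$, then $\psi := h'/s$ stays $\ge \psi(0) > 0$, whence $h(x) \ge \psi(0)\int_0^x s \to \infty$ by the very divergence $\int^\infty s = \infty$ you already used to get $\ell = 0$. This contradicts the boundedness of $h$ (both $\tilde f_\alpha$ and $f_\alpha$ are positive and non-increasing, hence bounded by their value at $0$), and no appeal to $L = 0$ is needed.
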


\begin{proof}
Define 
\begin{equation*}
\begin{split}
p(x) & = \exp\left(2\int_0^x \frac{\mu_{\xi} + \kappa_{\rho} z}{\sigma_{\xi}^2 + \sigma_{\rho}^2 z^2}dz\right) \\
& = \exp\left(\frac{2\mu_{\xi}}{\sigma_{\xi} \sigma_{\rho}}\arctan\left(\frac{\sigma_{\rho}}{\sigma_{\xi}}x\right)\right)\left(1+\frac{\sigma_{\rho}^2}{\sigma_{\xi}^2}x^2\right)^{\kappa_{\rho}/\sigma_{\rho}^2},
\end{split}
\end{equation*}
and 
$$g(x) = -\frac{2\alpha}{\sigma_{\xi}^2 + \sigma_{\rho}^2 x^2},$$
for all $x \in \rR_+$. Then, we can rewrite (\ref{eq_ODE}) in the Sturm-Liouville form
$$\left(p(x)f_{\alpha}'(x)\right)' + p(x)g(x)f_{\alpha}(x) = 0.$$
The existence of a (principal) solution satisfying (P) then follows form Corollary 6.4. p.357 in \cite{hartman2002}. The fact that the solutions are uniquely determined up to a constant factor follows from $\int_1^{\infty}p(x)^{-1}dx = \infty$ and Exercise 6.7. p.358 in \cite{hartman2002}. 
\end{proof}

\begin{rem}\rm
Under the condition $\alpha > \kappa_{\rho}$, it is possible to obtain an explicit solution of $(\ref{eq_ODE})$ using the method of contour integration as was done in Theorem A.1 in \cite{paulsen1997b}. Otherwise, the ODE can be solved using numerical integration.
\end{rem}

We now prove the approximation result.

\begin{thm}
Assume that $(\mathbf{H'})$ holds and that $\mu_{\rho} \leq 0$. Let $\alpha > 0$ and let $f_{\alpha} : \rR_+ \to \rR$ be any solution of (\ref{eq_ODE}) satisfying (P). We have
$$\lim_{n \to \infty}\eE(e^{-\alpha \tau^n(y)}\mathbf{1}_{\{\tau^n(y) < +\infty\}}) = \eE(e^{-\alpha \tau(y)}\mathbf{1}_{\{\tau(y) < +\infty\}}) =  \frac{f_{\alpha}(y)}{f_{\alpha}(0)}.$$
\end{thm}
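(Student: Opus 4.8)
The plan is to combine the already-established weak convergence of the discounted penalty functional from Corollary \ref{cor_convDPF} with a direct probabilistic computation of $\eE(e^{-\alpha \tau(y)}\mathbf{1}_{\{\tau(y) < +\infty\}})$ for the limiting GOU process $Y$, which we identify as $f_{\alpha}(y)/f_{\alpha}(0)$ using the ODE of Lemma \ref{lem_ODE}. Since Corollary \ref{cor_convDPF} already gives the first equality $\lim_n \eE(e^{-\alpha \tau^n(y)}\mathbf{1}_{\{\tau^n(y) < +\infty\}}) = \eE(e^{-\alpha \tau(y)}\mathbf{1}_{\{\tau(y) < +\infty\}})$, the entire content of the theorem is the second equality, i.e. an identification of the Laplace transform of the ruin time of $Y$ in terms of the ODE solution.

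First I would recall (from $(\mathbf{H'})$) that $Y$ solves the SDE $dY_t = \mu_{\xi}\,dt + \sigma_{\xi}\,d\tilde W_t + Y_{t-}\,d\hat R_t$ with $\hat R_t = \kappa_{\rho} t + \sigma_{\rho} W_t$, so $Y$ is a (time-homogeneous) one-dimensional diffusion with generator
\begin{equation*}
\mathcal{L}\phi(x) = \tfrac{1}{2}(\sigma_{\xi}^2 + \sigma_{\rho}^2 x^2)\phi''(x) + (\mu_{\xi} + \kappa_{\rho} x)\phi'(x),
\end{equation*}
since $W$ and $\tilde W$ are independent; the ODE \eqref{eq_ODE} is exactly $\mathcal{L}f_{\alpha} = \alpha f_{\alpha}$. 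Fix $y > 0$ and let $f_{\alpha}$ be a solution satisfying (P). The next step is to apply It\^o's formula to $e^{-\alpha t}f_{\alpha}(Y_t)$ up to the stopping time $\tau(y) \wedge T \wedge \sigma_m$, where $\sigma_m = \inf\{t : Y_t \geq m\}$ is an upper cutoff to control the martingale part; because $\mathcal{L}f_{\alpha} = \alpha f_{\alpha}$, the drift term vanishes and $M_t := e^{-\alpha(t\wedge\tau\wedge\sigma_m)}f_{\alpha}(Y_{t\wedge\tau\wedge\sigma_m})$ is a bounded martingale (bounded because $f_{\alpha}$ is continuous, hence bounded, on $[0,m]$, using $0 \le f_\alpha$ and $f_\alpha$ decreasing so $0 < f_\alpha \le f_\alpha(0)$ on the relevant range, plus $f_\alpha(m)$ on the upper side). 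Taking expectations gives
\begin{equation*}
f_{\alpha}(y) = \eE\!\left(e^{-\alpha(\tau\wedge T\wedge\sigma_m)}f_{\alpha}(Y_{\tau\wedge T\wedge\sigma_m})\right).
\end{equation*}

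Then I would pass to the limit, first $m \to \infty$ and then $T \to \infty$. Since $Y$ is a diffusion on $\rR_+$ started at $y>0$ with $\sigma_{\xi}^2 > 0$ ensuring non-degeneracy near $0$, and since $\mu_{\rho} \le 0$ (hence $\kappa_\rho = \mu_\rho + \sigma_\rho^2/2$ is controlled and the scale function integrates to infinity at $+\infty$, as already used in Lemma \ref{lem_ODE}), one has $\sigma_m \to \infty$ a.s. and the process does not explode, so $\tau\wedge T\wedge\sigma_m \to \tau\wedge T$ a.s. By continuity of $Y$, on $\{\tau \le T\}$ we have $Y_{\tau} = 0$ (the diffusion hits $0$ continuously, it does not jump across), so $f_{\alpha}(Y_{\tau\wedge T\wedge\sigma_m}) \to f_{\alpha}(0)$ there, while on $\{\tau > T\}$ the contribution is bounded by $e^{-\alpha T} f_{\alpha}(0) \to 0$ as $T\to\infty$; dominated convergence (everything dominated by $f_\alpha(0)$ on the lower side, and the $\sigma_m$ cutoff contribution vanishing since $\tau < \infty$ a.s. on the event it matters, or simply because $e^{-\alpha(\cdot)} \le 1$ and $f_\alpha(Y_{\sigma_m})$ times the indicator of $\{\sigma_m < \tau \wedge T\}$ can be shown negligible) yields
\begin{equation*}
f_{\alpha}(y) = f_{\alpha}(0)\,\eE\!\left(e^{-\alpha\tau(y)}\mathbf{1}_{\{\tau(y) < \infty\}}\right),
\end{equation*}
which is the claimed identity after dividing by $f_{\alpha}(0) > 0$. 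Independence of the result from the choice of $f_{\alpha}$ is automatic from the last display, but also follows from the uniqueness-up-to-constant statement in Lemma \ref{lem_ODE}, which is precisely why the hypothesis $\mu_{\rho} \le 0$ appears. Finally, the case $y = 0$ is trivial since $\tau(0) = 0$ and both sides equal $1$.

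The main obstacle I anticipate is the justification of the limit exchanges under the double cutoff: specifically, controlling the term $\eE(e^{-\alpha\sigma_m}f_{\alpha}(Y_{\sigma_m})\mathbf{1}_{\{\sigma_m < \tau\wedge T\}})$ as $m \to \infty$. One needs to know that $Y$ does not explode to $+\infty$ in finite time and that this boundary term vanishes; this is where the sign condition $\mu_{\rho} \le 0$ and the scale-function estimate $\int_1^\infty p(x)^{-1}\,dx = \infty$ from the proof of Lemma \ref{lem_ODE} do the work, since $+\infty$ is then a natural (inaccessible, non-explosive) boundary, so $\sigma_m \uparrow \infty$ a.s. and $\{\sigma_m < T\}$ has probability tending to $0$. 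A clean alternative is to avoid the upper cutoff entirely by exploiting that $f_{\alpha}$ is bounded — indeed $0 < f_{\alpha} \le f_{\alpha}(0)$ on $\rR_+$ since $f_{\alpha}$ is positive and decreasing — so that $e^{-\alpha(t\wedge\tau)}f_{\alpha}(Y_{t\wedge\tau})$ is already a bounded local martingale, hence a true martingale, and only the single limit $T\to\infty$ with dominated convergence remains; this is the route I would actually take, making the proof short.
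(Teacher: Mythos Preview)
Your proposal is correct, and the ``clean alternative'' you describe at the end is exactly the route the paper takes: since $f_{\alpha}$ is positive and non-increasing on $\rR_+$ by (P), one has $0 \le f_{\alpha}(Y_{t\wedge\tau(y)}) \le f_{\alpha}(0)$, so $e^{-\alpha(t\wedge\tau(y))}f_{\alpha}(Y_{t\wedge\tau(y)})$ is a bounded local martingale, hence a true martingale, and only the single limit $t\to\infty$ with dominated convergence is needed. The paper never introduces the upper cutoff $\sigma_m$ at all, so the bulk of your write-up (the double-cutoff argument and the discussion of the boundary term at $+\infty$) is unnecessary; you should simply lead with the bounded-local-martingale observation and drop the $\sigma_m$ machinery.
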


\begin{proof}
The convergence of the discounted penalty function is the content of Corollary \ref{cor_convDPF}.

We now compute the value for the limiting process using the idea in the proof of Theorem 2.1. in \cite{paulsen1993}. First, we show that $L = (f_{\alpha}(Y_{t \wedge \tau(y)})e^{-\alpha(t \wedge \tau(y))})_{t \geq 0}$ is a martingale with respect to the natural filtration of $Y$. Using It\^{o}'s lemma and the fact that $\langle Y, Y \rangle_t = \sigma_{\xi}^2 t + \sigma_{\rho}^2 \int_0^t Y^2_s ds$, we obtain
\begin{equation*}
\begin{split}
f_{\alpha}(Y_{t \wedge \tau(y)})e^{-\alpha(t \wedge \tau(y))} = f_{\alpha}(y) & + \sigma_{\xi} N^{(1)}_t + \sigma_{\rho} N^{(2)}_t \\
& + \int_0^{t \wedge \tau(y)}\frac{e^{-\alpha s}}{2}I(Y_s)ds,
\end{split}
\end{equation*}
where
$$N^{(1)}_t = \int_0^{t \wedge \tau(y)}e^{-\alpha s} f_{\alpha}'(Y_s) d\tilde{W}_s = \int_0^t \mathbf{1}_{\{s \leq \tau(y)\}} e^{-\alpha s} f_{\alpha}'(Y_s) d\tilde{W}_s,$$
$$N^{(2)}_t = \int_0^{t \wedge \tau(y)}e^{-\alpha s} f_{\alpha}'(Y_s)Y_s dW_s = \int_0^t \mathbf{1}_{\{s \leq \tau(y)\}} e^{-\alpha s} f_{\alpha}'(Y_s)Y_s dW_s$$
and
$$I(Y_s) = (\sigma_{\xi}^2 + \sigma_{\rho}^2 Y_s^2)f_{\alpha}''(Y_s) + 2(\mu_{\xi} + \kappa_{\rho} Y_s)f_{\alpha}'(Y_s) - 2\alpha f_{\alpha}(Y_s).$$
Since $\mathbf{1}_{\{s \leq \tau(y)\}}$ is adapted to the natural filtration of $Y$, $N^{(1)}$ and $N^{(2)}$ are local martingales and since $f_{\alpha}$ solves (\ref{eq_ODE}) $L$ is also a local martingale. 

Note that $Y_{t \wedge \tau(y)} \geq 0$ $(\pP-a.s.)$, for all $t \geq 0$, and that $f_{\alpha}$ is non-increasing by (P). Thus, we have $f_{\alpha}(Y_{t \wedge \tau(y)}) \leq f_{\alpha}(0)$ and we find that $L$ is a bounded local martingale, and thus a martingale. Using the property of constant expectation, we then obtain, for $t \geq 0$,
$$f_{\alpha}(y) = \eE\left(f_{\alpha}(Y_{t \wedge \tau(y)})e^{-\alpha(t \wedge \tau(y))}\right)$$
or equivalently
$$f_{\alpha}(y) = \eE\left(f_{\alpha}(Y_{t \wedge \tau(y)})e^{-\alpha(t \wedge \tau(y))}\mathbf{1}_{\{\tau(y) < +\infty\}}\right) + e^{-\alpha t}\eE\left(f_{\alpha}(Y_t)\mathbf{1}_{\{\tau(y) = \infty\}}\right).$$
Again since $f_{\alpha}(Y_{t \wedge \tau(y)}) \leq f_{\alpha}(0)$, we can pass to the limit $t \to \infty$ in the first expectation. Similarly, on the event $\{\tau(y) = \infty\}$, we have $Y_t \geq 0$ $(\pP-a.s.)$ and $f_{\alpha}(Y_t) \leq f_{\alpha}(0)$, for all $t \geq 0$, and the second term goes to $0$ as $t \to \infty$. Finally, using the fact that $Y$ is an almost surely continuous process, we obtain $Y_{\tau(y)} = Y_{\tau(y)-} = 0$ $(\pP-a.s.)$ and
$$f_{\alpha}(y) = \eE\left(f_{\alpha}(Y_{\tau(y)})e^{-\alpha\tau(y)}\mathbf{1}_{\{\tau(y) < +\infty\}}\right) = f_{\alpha}(0)\eE\left(e^{-\alpha\tau(y)}\mathbf{1}_{\{\tau(y) < +\infty\}}\right).$$

Lemma \ref{lem_ODE} also guarantees that this result does not depend on the particular choice of the solution of (\ref{eq_ODE}).
\end{proof}

\subsection{Approximation of the Ultimate Ruin Probability}

We have seen that, when $\xi_1$ and $\ln(\rho_1)$ both satisfy $(\mathbf{H^2})$, we have
$$\lim_{n \to \infty}\pP(\tau^n(y) \leq T) = \pP(\tau(y) \leq T),$$ 
for all $T \geq 0$. We would like to replace the finite-time ruin probability with the ultimate ruin probability  $\pP(\tau(y) < \infty)$  since for the latter, an explicit expression exists for the limiting process. However, the following classic example (see e.g. \cite{grandell1977}) shows that the ultimate ruin probability may fail to converge even if the finite-time ruin probability does. In fact, take $(Z^{(n)})_{t \geq 0}$ to be the deterministic process defined by
$$Z^{(n)}_t =
\begin{cases}
0 \text{ if } t < n, \\
-1 \text{ if } t \geq n. 
\end{cases}
$$
Then, we have $Z^{(n)} \to Z$, as $n \to \infty$, where $Z_t = 0$, for all $t \geq 0$, and we have also convergence of the finite-time ruin probability, since, as $n \to \infty$, $\inf_{0 \leq t \leq T}Z^{(n)}_t \to 0$, for all $T > 0$. But $\inf_{0 \leq t < \infty}Z^{(n)}_{t} = -1$, for all $n \in \nN^*$, and so the ultimate ruin probability fails to converge.

In general, proving the convergence of the ultimate ruin probability is a hard problem and depends on the particular model (see \cite{grandell1977} for another discussion). Still, we can give a sufficient condition for this convergence.

\begin{thm}\label{thm_convURP}
Assume that $(\mathbf{H'})$ holds. When $\mu_{\rho} \leq 0$, we have 
$$\lim_{n \to \infty}\pP(\tau^n(y) < \infty) = 1.$$ 
When $\mu_{\rho} > 0$, we assume additionally that there exists $C < 1$ and $n_0 \in \nN^{*}$ such that 
\begin{equation}\label{asm_condURP}
\sup_{n \geq n_0}\eE\left(e^{-2\gamma^{(n)}_1}\right)^n = \sup_{n \geq n_0}\eE\left((\rho^{(n)}_1)^{-2}\right)^n \leq C.
\end{equation}
Then,
$$\lim_{n \to \infty}\pP(\tau^n(y) < \infty) = \pP(\tau(y) < \infty) = \frac{H(-y)}{H(0)}$$
where, for $x \leq 0$,
$$H(x) = \int_{-\infty}^{x}(\sigma_{\xi}^2 + \sigma_{\rho}^2z^2)^{-(1/2+\mu_{\rho}/\sigma_{\rho}^2)}\exp\left(\frac{2\mu_{\xi}}{\sigma_{\xi}\sigma_{\rho}}\arctan\left(\frac{\sigma_{\rho}}{\sigma_{\xi}}z\right)\right)dz.$$
\end{thm}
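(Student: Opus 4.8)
The plan is to split the argument into two halves according to the sign of $\mu_{\rho}$, and in both halves to reduce the question about $\tau^n(y)$ to a statement about the behavior of the product $\prod_{i=1}^k \rho^{(n)}_i$ (equivalently, of the random walk $R^{(n)}$) combined with the convergence results already established. The formula for the limiting ultimate ruin probability $\pP(\tau(y) < \infty) = H(-y)/H(0)$ should be extracted by passing $\alpha \downarrow 0$ in the discounted-penalty formula $f_\alpha(y)/f_\alpha(0)$ of the previous theorem (or, equivalently, by repeating the It\^o/scale-function argument with $\alpha = 0$): the scale density of the diffusion $Y$ solving \eqref{eq_SDE} with $\hat R_t = \kappa_\rho t + \sigma_\rho W_t$ is exactly $(\sigma_\xi^2 + \sigma_\rho^2 z^2)^{-\kappa_\rho/\sigma_\rho^2}\exp\big(\tfrac{2\mu_\xi}{\sigma_\xi\sigma_\rho}\arctan(\tfrac{\sigma_\rho}{\sigma_\xi}z)\big)$, and since $\kappa_\rho/\sigma_\rho^2 = 1/2 + \mu_\rho/\sigma_\rho^2$ this matches the integrand of $H$; the ratio $H(-y)/H(0)$ is then the classical scale-function expression for the probability that $Y$ started at $y$ ever hits $0$, valid precisely when $H$ is finite at $-\infty$, which needs $\mu_\rho > 0$ (so that $1/2 + \mu_\rho/\sigma_\rho^2 > 1/2$ makes the integrand decay — one must check integrability at $-\infty$ here).

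For the case $\mu_{\rho} \leq 0$, I would show directly that both $\pP(\tau^n(y) < \infty) \to 1$ and $\pP(\tau(y) < \infty) = 1$, so that convergence is automatic. For the limiting process: when $\mu_\rho \le 0$, $H(0) = \int_{-\infty}^0 (\sigma_\xi^2 + \sigma_\rho^2 z^2)^{-(1/2+\mu_\rho/\sigma_\rho^2)}\exp(\cdots)\,dz$ diverges (the exponent $1/2 + \mu_\rho/\sigma_\rho^2 \le 1/2 < 1$), so the scale function is infinite at $-\infty$, which is the standard criterion for the diffusion to be recurrent (reach $0$) almost surely; alternatively, run the martingale argument of the previous proof with $\alpha=0$ and $f_0$ the decreasing scale-type solution, noting $f_0$ is bounded below, to get $\pP(\tau(y)<\infty)=1$. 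For the discrete process, I would use the representation \eqref{eq_discreteExplicit}: $\theta^{(n)}_t < 0$ as soon as $y + \sum_{i=1}^{[nt]}\xi^{(n)}_i \prod_{j=1}^i (\rho^{(n)}_j)^{-1} < 0$, and since $\mu_\rho \le 0$ the drift of $R^{(n)}$ is nonpositive so $\liminf R^{(n)}_k = -\infty$ a.s.\ (the increments $\ln\rho_k - \mu_\rho$ have mean zero and are genuinely fluctuating because $\mathrm{Var}(\ln\rho_1)>0$); combined with the fact that the $\xi^{(n)}_i$ retain a fixed nonzero sign pattern (they do not all become nonnegative, since $\mathrm{Var}(\xi_1)>0$ forces $\pP(\xi_1<0)$ or the fluctuation of $\xi_i - \mu_\xi$ to be positive) this drives the partial sums below $-y$ eventually. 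The clean way is: conditionally on the $\xi$-sequence, the weighted sum $\sum \xi^{(n)}_i \prod (\rho^{(n)}_j)^{-1}$ has increments whose magnitudes blow up along the subsequence where $R^{(n)}$ is very negative, so by a Borel–Cantelli / conditional second-moment argument the sum is a.s.\ unbounded below, hence crosses $-y$.

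For the case $\mu_{\rho} > 0$ with the extra hypothesis \eqref{asm_condURP}, the strategy is: (i) show $\pP(\tau^n(y) < \infty)$ converges, and (ii) identify the limit with $\pP(\tau(y)<\infty)$. For (ii), once we know the sequence $\pP(\tau^n(y)\le T)$ increases to $\pP(\tau^n(y)<\infty)$ in $T$ and likewise for $Y$, and that $\pP(\tau^n(y)\le T)\to\pP(\tau(y)\le T)$ for each $T$ by Theorem \ref{cor_convRuinProb}, the equality $\lim_n \pP(\tau^n(y)<\infty) = \pP(\tau(y)<\infty)$ follows from a Moore–Osgood type interchange of limits, for which it suffices to have uniform (in $n$) smallness of the tail $\pP(T < \tau^n(y) < \infty)$ as $T\to\infty$ — and this is exactly what \eqref{asm_condURP} is designed to deliver. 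Concretely, on $\{\tau^n(y) > T\}$ we have $\theta^{(n)}_t \ge 0$ for $t \le T$; the hypothesis $\eE((\rho^{(n)}_1)^{-2})^n \le C < 1$ means the discounted safety factor $\eE\big(\prod_{j=1}^{[nt]}(\rho^{(n)}_j)^{-2}\big) = \eE((\rho^{(n)}_1)^{-2})^{[nt]} \le C^{[nt]/n + O(1/n)}$ decays exponentially in $t$ uniformly in $n \ge n_0$; then an $L^2$/Doob-maximal-inequality estimate on the martingale $\sum_{i} \xi^{(n)}_i \prod_{j\le i}(\rho^{(n)}_j)^{-1}$ (whose quadratic variation involves $\prod (\rho^{(n)}_j)^{-2}$) shows the discrete surplus is, with probability tending to $1$ uniformly, already driven below $0$ before a deterministic time, or else it stays positive forever; this yields $\sup_{n\ge n_0}\pP(T < \tau^n(y) < \infty) \to 0$ as $T \to \infty$. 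Step (i) then follows since $\pP(\tau^n(y)<\infty) = \pP(\tau^n(y)\le T) + \pP(T<\tau^n(y)<\infty)$ and both pieces converge (the first by Theorem \ref{cor_convRuinProb}, the second uniformly small).

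The main obstacle I anticipate is step (i)/(ii) in the $\mu_\rho > 0$ case: turning the moment bound \eqref{asm_condURP} into a genuine uniform tail estimate on $\pP(T < \tau^n(y) < \infty)$ requires care, because one must control not just the mean of the weighted sum but the probability that its running minimum over the infinite horizon $t \in (T,\infty)$ stays above $-y$, uniformly in $n$; this is a uniform-integrability / maximal-inequality argument for a family of discrete martingales whose predictable brackets are controlled by the exponentially-decaying products, and getting the constants uniform in $n$ (rather than merely for each fixed $n$) is the delicate point. By contrast, the $\mu_\rho \le 0$ case and the identification of $H(-y)/H(0)$ as the scale-function ratio are comparatively routine given the machinery already in place.
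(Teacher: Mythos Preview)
Your overall architecture for the $\mu_\rho > 0$ case is right --- one does need a uniform-in-$n$ interchange of the limits $n\to\infty$ and $T\to\infty$, and the hypothesis \eqref{asm_condURP} is indeed what powers the $L^2$/BDG estimates on the tail integral. But there is a genuine gap in your sketch: the claim that the maximal-inequality bound on $\int_{T+}^\infty e^{-R^{(n)}_{s-}}\,dX^{(n)}_s$ \emph{directly} yields $\sup_{n\ge n_0}\pP(T<\tau^n(y)<\infty)\to 0$ does not go through. On the event $\{T<\tau^n(y)<\infty\}$, the process may have been hovering arbitrarily close to zero at time $T$, so the tail contribution $\int_{T+}^{\tau^n(y)} e^{-R^{(n)}_{s-}}\,dX^{(n)}_s$ need only be \emph{slightly} negative to trigger ruin; an $L^2$ bound on this tail cannot by itself force the probability of the event to be small. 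The paper closes this gap with an $\epsilon$-shift: for fixed $\epsilon\in(0,y)$ one writes
\[
\{\tau^n(y)<\infty\}\ \subseteq\ \{\tau^n(y-\epsilon)\le T\}\ \cup\ \Bigl\{\Bigl|\int_{T+}^{\tau^n(y)} e^{-R^{(n)}_{s-}}\,dX^{(n)}_s\Bigr|\ge \epsilon\Bigr\},
\]
because if the tail integral has modulus $<\epsilon$ then the partial integral up to $T$ must already be below $-(y-\epsilon)$, forcing $\tau^n(y-\epsilon)\le T$. Now the second set is exactly the kind of event your BDG/Markov estimates (driven by \eqref{asm_condURP}) do control uniformly in $n$, and the first converges by Theorem~\ref{cor_convRuinProb} to $\pP(\tau(y-\epsilon)\le T)$; letting $T\to\infty$ and then $\epsilon\downarrow 0$ (using continuity of $y\mapsto\pP(\tau(y)<\infty)$) finishes the $\limsup$ bound. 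This $\epsilon$-comparison with a shifted initial level is the missing idea in your proposal.

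For the $\mu_\rho\le 0$ case you are working much harder than necessary. Once you accept (from the scale-function computation you outline, or directly from the literature on the limiting diffusion) that $\pP(\tau(y)<\infty)=1$, the discrete result is immediate from what is already proved: for every $T$, $\pP(\tau^n(y)<\infty)\ge \pP(\tau^n(y)\le T)\to\pP(\tau(y)\le T)$ by Theorem~\ref{cor_convRuinProb}, and letting $T\to\infty$ gives $\liminf_n\pP(\tau^n(y)<\infty)\ge 1$. There is no need for a separate Borel--Cantelli or recurrence argument on the discrete process (and your sketch of that argument --- ``the weighted sum is a.s.\ unbounded below'' --- is not obviously correct as stated, since for each fixed $n$ the increments $\gamma^{(n)}_k$ have a rescaled distribution and the claimed oscillation of $R^{(n)}$ requires care). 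Your identification of $H(-y)/H(0)$ as the scale-function ratio for the limiting diffusion is correct and is how the paper obtains the explicit formula as well.
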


Before turning to the proof of the theorem, we give two examples to illustrate Condition (\ref{asm_condURP}).

\begin{ex}[Approximation of the ruin probability with normal log-returns]\label{ex_normal}
Take $\xi_1$ to be any random variable satisfying $\mathbf{(H^2)}$ and $\ln(\rho_1) \overset{d}{=} \mathcal{N}(\mu_{\rho}, \sigma_{\rho}^2)$, with $\mu_{\rho} > 0$, then 
$$\eE\left(e^{-2\gamma^{(n)}_1}\right)^n = e^{-2(\mu_{\rho} - \sigma_{\rho}^2)},$$
for all $n \in \nN^*$, so $n_0 = 1$ and the condition $C < 1$ is equivalent to $\mu_{\rho} > \sigma_{\rho}^2$.
\end{ex}

\begin{ex}[Approximation of the ruin probability with NIG log-returns]\rm
More generally, take $\xi_1$ to be any random variable satisfying $\mathbf{(H^2)}$ and $\ln(\rho_1)$ to be a normal inverse gaussian $\mathrm{NIG}(\alpha, \beta, \delta, \mu)$ random variable with $0 \leq |\beta| < \alpha$, $\delta > 0$ and $\mu \in \rR$ (recall Example \ref{ex_1par} for the definition). We can then use Taylor's formula on the function $x \mapsto \sqrt{\alpha^2 - (\beta - x)^2}$ around $0$, to obtain 
\begin{equation}\label{eq_taylorNIG}
\sqrt{\alpha^2 - \left(\beta - \frac{2}{\sqrt{n}}\right)^2} = \lambda + \frac{2}{\sqrt{n}}\frac{\beta}{\lambda} - \frac{2}{n}\frac{\alpha^2}{[\alpha^2 - (\beta -x_n)^2]^{3/2}},
\end{equation}
for some $x_n \in [0, 2/\sqrt{n}]$, where $\lambda = \sqrt{\alpha^2 - \beta^2}$. Since the mean is given by $\mu_{\rho} = \mu + \delta \beta/\lambda$, we obtain using (\ref{eq_momgen_NIG}) and (\ref{eq_taylorNIG})
\begin{equation*}
\begin{split}
\lim_{n \to \infty}\eE\left(e^{-2\gamma^{(n)}_1}\right)^n & = \exp\left(-2\mu_{\rho} + \frac{2\delta \alpha^2}{\lambda^3}\right) \\
& = \exp\left(-2\left(\mu + \frac{\delta\beta\lambda^2 - \delta\alpha^2}{\lambda^3}\right)\right).
\end{split}
\end{equation*}
Thus, when 
$$\mu + \frac{\delta\beta\lambda^2 - \delta\alpha^2}{\lambda^3} > 0,$$
this limit is strictly smaller than $1$ and we can find $n_0 \in \nN^*$ and $C < 1$ such that (\ref{asm_condURP}) is satisfied. Taking $\beta = 0$ and $\sigma^2 = \delta/\alpha$ we retrieve the condition for normal returns given in Example \ref{ex_normal}.
\end{ex}

\begin{proof}[Proof of Theorem \ref{thm_convURP}]
We have, for all $n \in \nN^*$ and $T > 0$, 
$$\pP(\tau^n(y) < \infty) \geq \pP(\tau^n(y) \leq T)$$ 
and, by Theorem \ref{cor_convRuinProb},
$$\liminf_{n \to \infty}\pP(\tau^n(y) < \infty) \geq \pP(\tau(y) \leq T).$$
So, letting $T \to \infty$,
$$\liminf_{n \to \infty}\pP(\tau^n(y) < \infty) \geq \pP(\tau(y) < \infty).$$
Now if $\pP(\tau(y) < \infty) = 1$, which is equivalent to $\mu_{\xi} \leq 0$ by \cite{paulsen1998}, there is nothing else to prove. So we assume that $\pP(\tau(y) < \infty) < 1$, or $\mu_{\xi} > 0$, and we will prove that
$$\limsup_{n \to \infty}\pP(\tau^n(y) < \infty) \leq \pP(\tau(y) < \infty),$$
under the additional condition (\ref{asm_condURP}).

Fix $y > \epsilon > 0$, $T > 0$ and, when $\tau^n(y) > T$, denote by $K^{(n)}_{\epsilon, T}$ the event
$$K^{(n)}_{\epsilon, T} = \left\{\left|\int_{T+}^{\tau^n(y)}e^{-R^{(n)}_{s-}}dX^{(n)}_s\right| < \epsilon\right\}.$$
We have,
\begin{equation*}
\begin{split}
\{\tau^n(y) < \infty\} = \{\tau^n(y) \leq T\} & \cup \{\tau^n(y) \in (T, \infty), K^{(n)}_{\epsilon, T}\} \\
& \cup \{\tau^n(y) \in (T, \infty), (K_{\epsilon, T}^{(n)})^{\complement}\}.
\end{split}
\end{equation*}
But, on the event $\{\tau^n(y) \in (T, \infty), K_{\epsilon, T}^{(n)}\}$, 
$$\int_{0+}^T e^{-R^{(n)}_{s-}}dX^{(n)}_s + \int_{T+}^{\tau^n(y)} e^{-R^{(n)}_{s-}}dX^{(n)}_s < -y$$
which implies 
$$\int_{0+}^T e^{-R^{(n)}_{s-}}dX^{(n)}_s < - y + \epsilon,$$
or equivalently that $\tau^n(y-\epsilon) \leq T$, by (\ref{eq_GOUtheta}). Thus,
$$\{\tau^n(y) \leq T\} \cup \{\tau^n(y) \in (T, \infty), K_{\epsilon, T}^{(n)}\} \subseteq \{\tau^n(y-\epsilon) \leq T\}.$$

Then, we have $\{\tau^n(y) \in (T, \infty), (K_{\epsilon, T}^{(n)})^{\complement}\} \subseteq (K_{\epsilon, T}^{(n)})^{\complement}$ and thus
$$\limsup_{n \to \infty}\pP(\tau^n(y) < \infty) \leq \pP(\tau(y-\epsilon) \leq T) + \limsup_{n \to \infty}\pP\left((K_{\epsilon, T}^{(n)})^{\complement}\right).$$
So, we need to show that
$$\lim_{T \to \infty}\limsup_{n \to \infty}\pP\left((K_{\epsilon, T}^{(n)})^{\complement}\right) = 0.$$

Using the decomposition (\ref{eq_decomposeX}), we obtain 
\begin{equation*}
(K^{(n)}_{\epsilon, T})^{\complement} \subseteq \left\{|\mu_{\xi}|\left|\int_{T+}^{\tau^n(y)}e^{-R^{(n)}_{s-}}dA^{(n)}_s \right| \geq \frac{\epsilon}{2}\right\} \cup \left\{\left|\int_{T+}^{\tau^n(y)}e^{-R^{(n)}_{s-}}dN^{(n)}_s \right| \geq \frac{\epsilon}{2}\right\}
\end{equation*}
Denote by $E_{1,T}^{(n)}$ and $E_{2,T}^{(n)}$ the sets on the r.h.s. of the above equation. 

When $n \geq n_0$, we obtain, recalling the explicit form of the integral and using Markov's inequality,
\begin{equation*}
\begin{split}
\pP(E_{1,T}^{(n)}) & \leq \frac{2 |\mu_{\xi}|}{n\epsilon}\eE\left(\sum_{i = [nT]+1}^{[n\tau^n(y)]+1}e^{-\sum_{j = 1}^{i}\gamma^{(n)}_j}\right) \\
& \leq \frac{2 |\mu_{\xi}|}{n\epsilon}\eE\left(\sum_{i = [nT]+1}^{\infty}\prod_{j = 1}^{i}e^{-\gamma^{(n)}_j}\right) = \frac{2 |\mu_{\xi}|}{n\epsilon}\sum_{i = [nT]+1}^{\infty}\eE\left(e^{-\gamma^{(n)}_1}\right)^{i} \\
& = \frac{2 |\mu_{\xi}|}{n\epsilon}\eE\left(e^{-\gamma^{(n)}_1}\right)^{[nT]}\sum_{j = 1}^{\infty}\eE\left(e^{-\gamma^{(n)}_1}\right)^{j}.
\end{split}
\end{equation*}
But, since $\eE(e^{-\gamma^{(n)}_1}) \leq \eE(e^{-2\gamma^{(n)}_1})^{1/2} \leq C^{1/(2n)} < 1$, we have
$$\pP(E_{1,T}^{(n)}) \leq \frac{2 |\mu_{\xi}|}{\epsilon}\frac{C^{1/(2n)}}{n(1- C^{1/(2n)})} C^T.$$
Moreover it is easy to see that $C^{-1/(2n)}(n(1 - C^{-1/(2n)}))^{-1} \to -2/\ln(C)$ as $n \to \infty$, and so $\lim_{T \to \infty}\limsup_{n \to \infty} \pP(E^{(n)}_{1,T}) = 0$.

On the other hand, using the Chebyshev and Burkholder-Davis-Gundy inequalities, we obtain
\begin{equation*}
\begin{split}
\pP(E^{(n)}_{2, T}) & \leq \frac{4}{\epsilon^2}\eE\left(\left|\int_{T+}^{\tau^n(y)}e^{-R^{(n)}_{s-}}dN^{(n)}_s\right|^2\right) \\
& \leq \frac{4}{\epsilon^2}\eE\left(\sup_{T < t < \infty}\left|\int_{T+}^{t}e^{-R^{(n)}_{s-}}dN^{(n)}_s\right|^2\right) \\
& \leq \frac{4K}{\epsilon^2}\eE\left(\int_{T+}^{\infty}e^{-2R^{(n)}_{s-}}d[N^{(n)},N^{(n)}]_s\right),
\end{split}
\end{equation*}
where $K$ is a constant. But,
$$[N^{(n)},N^{(n)}]_t = \sum_{0 < s \leq t}(\Delta N^{(n)}_s)^2 = \sum_{i = 1}^{[nt]}\left(\frac{\xi_i - \mu_{\xi}}{\sqrt{n}}\right)^2.$$
Thus, writing the stochastic integral explicitly and using the same computation as before, we obtain
\begin{equation*}
\begin{split}
\pP(E^{(n)}_{2, T}) & \leq \frac{4K}{\epsilon^2} \eE\left(\sum_{i = [nT] + 1}^{\infty}\left(\frac{\xi_i-\mu_{\xi}}{\sqrt{n}}\right)^2e^{-2\sum_{j = 1}^i\gamma^{(n)}_j}\right) \\
& = \frac{4K\sigma_{\xi}^2}{\epsilon^2 n} \sum_{i = [nT]+1}^{\infty}\eE(e^{-2\gamma^{(n)}_1})^i \leq \frac{4K}{\epsilon^2} C^T \sigma_{\xi}^2 \frac{C^{1/n}}{n(1-C^{1/n})}.
\end{split}
\end{equation*}
Again, using the fact that the expression on the r.h.s. above converges, when $n \to \infty$, we find that
$$\lim_{T \to \infty}\limsup_{n \to \infty}\pP\left(E^{(n)}_{2,T}\right) = 0$$
and
$$\limsup_{n \to \infty}\pP(\tau^n(y) < \infty) \leq \pP(\tau(y-\epsilon) < \infty).$$
So, letting $\epsilon \to 0$ and using the continuity of $y \mapsto \pP(\tau(y) < \infty)$, we obtain
$$\limsup_{n \to \infty}\pP(\tau^n(y) < \infty) \leq \pP(\tau(y) < \infty).$$
The explicit expression for the ultimate ruin probability of the limiting process is given in \cite{paulsen1997b}.
\end{proof}

\subsection{Approximation of the Moments} 

In this section, we obtain a recursive formula for the computation of the moments of the limiting process $Y$ at a fixed time which, for simplicity, we choose to be $T = 1$ and prove the convergence of the moments of $\theta^{(n)}_1$ to the moments of $Y_1$. This gives a way to approximate the moments of $\theta^{(n)}_1$.

\begin{prop}\label{prop_finiteMom}
Assume that the limiting process $Y = (Y_t)_{t \geq 0}$ is given by (\ref{eq_GOU}) with $X_t = \mu_{\xi}t + \sigma_{\xi}\tilde{W}_t$ and $R_t = \mu_{\rho}t + \sigma_{\rho}W_t$, for all $t \geq 0$. We have, for all $p \in \nN$,
\begin{equation}\label{eq_finitemoments}
\eE\left(\sup_{0 \leq t \leq 1}|Y_t|^p\right) < \infty.
\end{equation}
Moreover, letting $m_p(t) = \eE[(Y_t)^p]$, for each  $0 \leq t \leq 1$ and $p \in \nN$, we have the following recursive formula:  $m_0(t) = 1$,
\begin{equation} \label{eq_recursiveMom1}
m_1(t) =
\begin{cases}
y e^{\kappa_{\rho} t} + \frac{\mu_{\xi}}{\kappa_{\rho}}(e^{\kappa_{\rho} t} - 1) \text{ when }\kappa_{\rho} \neq 0, \\
 y + \mu_{\xi}t \text{ when }\kappa_{\rho} = 0,
\end{cases}
\end{equation}
with $\kappa_{\rho} = \mu_{\rho} + \sigma_{\rho}^2/2$ and, for each $p \geq 2$,
\begin{equation} \label{eq_recursiveMom2}
m_p(t) = y^pe^{a_pt} + \int_0^t e^{a_p (t-s)}\left(b_p m_{p-1}(s) + c_p m_{p-2}(s)\right)ds,
\end{equation}
with $a_p = p\mu_{\rho} + p^2\sigma_{\rho}^2/2$, $b_p = p\mu_{\xi}$ and $c_p = p(p-1)\sigma_{\xi}^2/2$.
\end{prop}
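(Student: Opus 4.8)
The plan is to establish the uniform moment bound (\ref{eq_finitemoments}) first, and then to obtain the recursion by applying It\^o's formula to $Y_t^p$ and reducing the resulting identity to a linear first-order ODE, the whole being an induction on $p$.

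\textbf{Step 1 (finiteness of moments).} Under $(\mathbf{H'})$, $Y$ solves the linear SDE $dY_t = (\mu_{\xi} + \kappa_{\rho} Y_t)\,dt + \sigma_{\xi}\,d\tilde W_t + \sigma_{\rho} Y_t\,dW_t$ coming from (\ref{eq_SDE}), whose coefficients are affine, hence globally Lipschitz with linear growth. I would localize with the stopping times $T_m = \inf\{t \ge 0 : |Y_t| > m\}$, set $Y_t^* = \sup_{0 \le s \le t}|Y_s|$, and for $p \ge 2$ estimate $\eE\bigl((Y_{t \wedge T_m}^*)^p\bigr)$ by combining the elementary bound $|a+b+c|^p \le 3^{p-1}(|a|^p + |b|^p + |c|^p)$, Doob's $L^p$ maximal inequality and the Burkholder--Davis--Gundy inequality applied to $\int_0^{\cdot} \sigma_{\rho} Y_s\,dW_s$, and Jensen's inequality to extract $Y_s^*$ from the time integral. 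Since $X$ is a Brownian motion with drift, $\eE\bigl((\sup_{0 \le s \le 1}|X_s|)^p\bigr) < \infty$, and one arrives at an inequality of the form $\eE\bigl((Y_{t \wedge T_m}^*)^p\bigr) \le C\bigl(1 + \int_0^t \eE\bigl((Y_{s \wedge T_m}^*)^p\bigr)\,ds\bigr)$ with $C$ independent of $m$; Gronwall's lemma then gives a bound uniform in $m$, and Fatou's lemma, letting $m \to \infty$, yields (\ref{eq_finitemoments}) for $p \ge 2$, hence for all $p \in \nN$ by Jensen. Alternatively one can read the same conclusion off the explicit representation (\ref{eq_GOU}), using that $\sup_{0 \le t \le 1} e^{pR_t}$ has moments of all orders (as $R$ is Brownian with drift), that conditionally on the path of $R$ the process $\int_0^{\cdot} e^{-R_{s-}}\,dX_s$ is a Gaussian martingale to which BDG applies, and the independence of $R$ and $X$.

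\textbf{Step 2 (the recursion).} With (\ref{eq_finitemoments}) available, apply It\^o's formula to $Y_t^p$ for $p \ge 2$. Since in the pure diffusion case $Y$ is continuous with $\langle Y \rangle_t = \sigma_{\xi}^2 t + \sigma_{\rho}^2 \int_0^t Y_s^2\,ds$, the drift term collects into $a_p Y_s^p + b_p Y_s^{p-1} + c_p Y_s^{p-2}$, with $a_p = p\mu_{\rho} + p^2\sigma_{\rho}^2/2$ (after substituting $\kappa_{\rho} = \mu_{\rho} + \sigma_{\rho}^2/2$ into the coefficient $p\kappa_{\rho} + p(p-1)\sigma_{\rho}^2/2$ of $Y_s^p$), $b_p = p\mu_{\xi}$ and $c_p = p(p-1)\sigma_{\xi}^2/2$, while the martingale part is $\int_0^t p\sigma_{\xi} Y_s^{p-1}\,d\tilde W_s + \int_0^t p\sigma_{\rho} Y_s^p\,dW_s$. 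By (\ref{eq_finitemoments}) (with moments up to order $2p$) this martingale part is a genuine martingale, so taking expectations and using Fubini gives $m_p(t) = y^p + \int_0^t\bigl(a_p m_p(s) + b_p m_{p-1}(s) + c_p m_{p-2}(s)\bigr)\,ds$. Hence $m_p$ is the unique solution of $m_p' = a_p m_p + b_p m_{p-1} + c_p m_{p-2}$ with $m_p(0) = y^p$; multiplying by the integrating factor $e^{-a_p s}$ and integrating yields (\ref{eq_recursiveMom2}). The cases $p = 0$ (trivially $m_0 \equiv 1$) and $p = 1$ (where $c_1 = 0$ and $a_1 = \kappa_{\rho}$, so $m_1' = \kappa_{\rho} m_1 + \mu_{\xi}$) are treated directly, the latter giving (\ref{eq_recursiveMom1}) by the same integrating-factor computation after distinguishing $\kappa_{\rho} \ne 0$ and $\kappa_{\rho} = 0$.

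\textbf{Main obstacle.} The It\^o computation, the passage to the ODE and its resolution are entirely routine; the real work is Step 1, and within it the localization needed to apply Gronwall's lemma legitimately (one cannot take expectations in the a priori estimate before the moments are known to be finite) and, in Step 2, to be sure the martingale part is genuinely a martingale rather than only a local one once expectations are taken. Once the uniform-in-$m$ estimate is organized correctly, everything else follows by induction on $p$.
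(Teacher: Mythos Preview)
Your proposal is correct and follows essentially the same route as the paper: establish the moment bound, apply It\^o's formula to $Y_t^p$, take expectations (justified via the moment bound) to obtain a first-order linear ODE for $m_p$, and solve it with an integrating factor. The only cosmetic differences are that the paper dispatches Step~1 by citing a standard SDE moment estimate (Nualart, Corollary~2.2.1) rather than writing out the localization/Gronwall argument, and in Step~2 it localizes with stopping times before taking expectations instead of invoking the $L^2$ integrability of the integrands directly.
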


\begin{proof}
The existence of the moments (\ref{eq_finitemoments}) follows, for $p \geq 2$, from the general existence result for the strong solutions of SDEs, see e.g. Corollary 2.2.1 p.119 in \cite{nualart} and, for $p = 1$, from Cauchy-Schwarz's inequality. 

Set $m_p(t) = \eE[(Y_t)^p]$, for all $0 \leq t \leq 1$ and $p \in \nN^*$. Suppose that $p \geq 2$. For $r \in \nN^*$, define the stopping times
$$\theta_r = \inf{\{t > 0 : |Y_t| > r\}}$$
with $\inf\{\emptyset\} = +\infty$. Then, applying It\^{o}'s lemma and using $\langle Y, Y \rangle_t = \sigma_{\xi}^2 t + \sigma_{\rho}^2 \int_0^t Y^2_s ds$, yields
\begin{equation*}
\begin{split}
(Y_{t\wedge\theta_r})^p = y^p & + p\mu_{\xi}\int_0^{t\wedge\theta_r}(Y_s)^{p-1}ds + p\sigma_{\xi}\int_0^{t\wedge\theta_r}(Y_s)^{p-1}d\tilde{W}_s \\
& + p\kappa_{\rho}\int_0^{t\wedge\theta_r}(Y_s)^pds + p\sigma_{\rho}\int_0^{t\wedge\theta_r}(Y_s)^pdWs \\
& + \frac{p(p-1)}{2}\sigma_{\xi}^2\int_0^{t\wedge\theta_r}(Y_s)^{p-2} ds \\
& + \frac{p(p-1)}{2}\sigma_{\rho}^2\int_0^{t\wedge\theta_r}(Y_s)^p ds.
\end{split}
\end{equation*}

Thus, using Fubini's theorem and the fact that the stochastic integrals are martingales, we obtain
\begin{equation*}
\begin{split}
\eE[(Y_{t\wedge\theta_r})^p] = y^p & + p\mu_{\xi}\int_0^{t\wedge\theta_r}\eE[(Y_s)^{p-1}]ds + p\kappa_{\rho}\int_0^{t\wedge\theta_r}\eE[(Y_s)^p]ds \\
& + \frac{p(p-1)}{2}\sigma_{\xi}^2\int_0^{t\wedge\theta_r}\eE[(Y_s)^{p-2}] ds \\
& + \frac{p(p-1)}{2}\sigma_{\rho}^2\int_0^{t\wedge\theta_r}\eE[(Y_s)^p] ds.
\end{split}
\end{equation*}

Now we can take the limit as $r \to \infty$, and use (\ref{eq_finitemoments}) to pass it inside the expectation of the l.h.s. of the above equation. Differentiating w.r.t. $t$, we then obtain the following ODE
\begin{equation*}
\begin{split}
\frac{d}{dt}\eE[(Y_t)^p] & = \left(p\kappa_{\rho} + \frac{p(p-1)}{2}\sigma_{\rho}^2\right)\eE[(Y_t)^p] + p \mu_{\xi} \eE[(Y_t)^{p-1}] \\
& + \frac{p(p-1)}{2}\sigma_{\xi}^2\eE[(Y_t)^{p-2}],
\end{split}
\end{equation*}
and $\eE[(Y_0)^p] = y^p$. This is an inhomogeneous linear equation of the first order which can be solved explicitly to obtain (\ref{eq_recursiveMom2}).

For $p = 1$, using the same technique as above, we obtain
$$\eE(Y_t) = y + \mu_{\xi} t + \kappa_{\rho} \int_0^t \eE(Y_s) ds.$$
If $\kappa_{\rho} = 0$, there is nothing to prove. If $\kappa_{\rho} \neq 0$, we obtain by differentiating w.r.t. $t$,
$$\frac{d}{dt}\eE(Y_t) = \mu_{\xi} + k_{\rho} \eE(Y_t),$$
with $\eE(Y_0) = y$ and this can be solved to obtain (\ref{eq_recursiveMom1}).
\end{proof}

We now state the approximation result.

\begin{thm}\label{thm_convMom}
Assume that $(\mathbf{H'})$ holds. Assume that $\eE(|\xi_1|^q) < \infty$, and that 
\begin{equation}\label{asm_thm2}
\sup_{n \in \nN^*}\eE\left(e^{q\gamma^{(n)}_1}\right)^n = \sup_{n \in \nN^*}\eE\left((\rho^{(n)}_1)^q\right)^n < \infty,
\end{equation}
for some integer $q \geq 2$. Then, for each $p \in \nN^*$ such that $1 \leq p < q$, we have 
$$\lim_{n \to \infty}\eE[(\theta^{(n)}_1)^p] = \eE[(Y_1)^p] = m_p(1),$$
for the function $m_p$ defined in Proposition \ref{prop_finiteMom}.
\end{thm}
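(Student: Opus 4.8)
The plan is to prove convergence of the moments $\eE[(\theta^{(n)}_1)^p]$ to $\eE[(Y_1)^p]$ by combining the weak convergence $\theta^{(n)} \overset{d}{\to} Y$ from Theorem \ref{thm_1} (which in particular gives $\theta^{(n)}_1 \overset{d}{\to} Y_1$ on $\rR$, since evaluation at the fixed time $t=1$ is a.s.\ continuous for the Skorokhod topology as the limit is a.s.\ continuous) with a uniform integrability argument. Recall the standard fact that if $\theta^{(n)}_1 \overset{d}{\to} Y_1$ and the family $\{(\theta^{(n)}_1)^p : n \in \nN^*\}$ is uniformly integrable, then $\eE[(\theta^{(n)}_1)^p] \to \eE[(Y_1)^p]$. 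Since $\eE[(Y_1)^p] = m_p(1)$ by Proposition \ref{prop_finiteMom}, this would finish the proof. So the real content is to establish uniform integrability, and for that it suffices to show $\sup_{n \in \nN^*} \eE(|\theta^{(n)}_1|^{p+\delta}) < \infty$ for some $\delta > 0$; in fact I would aim directly for $\sup_{n} \eE(|\theta^{(n)}_1|^{q}) < \infty$, which bounds all moments of order $p < q$ uniformly at once (using $q > p$).

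First I would use the explicit representation (\ref{eq_discreteExplicit}): $\theta^{(n)}_1 = \prod_{i=1}^{n}\rho^{(n)}_i\bigl(y + \sum_{i=1}^{n}\xi^{(n)}_i\prod_{j=1}^i (\rho^{(n)}_j)^{-1}\bigr)$, or equivalently the additive form $\theta^{(n)}_1 = y\prod_{i=1}^n \rho^{(n)}_i + \sum_{i=1}^n \xi^{(n)}_i \prod_{j=i+1}^n \rho^{(n)}_j$. Then $|\theta^{(n)}_1| \leq |y|\prod_{i=1}^n \rho^{(n)}_i + \sum_{i=1}^n |\xi^{(n)}_i| \prod_{j=i+1}^n \rho^{(n)}_j$. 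Applying Minkowski's inequality in $L^q$ gives
\begin{equation*}
\|\theta^{(n)}_1\|_q \leq |y|\,\Bigl\|\prod_{i=1}^n \rho^{(n)}_i\Bigr\|_q + \sum_{i=1}^n \bigl\|\xi^{(n)}_i\bigr\|_q \,\Bigl\|\prod_{j=i+1}^n \rho^{(n)}_j\Bigr\|_q,
\end{equation*}
where I have used the independence of the $\xi$-sequence from the $\rho$-sequence to factor the $L^q$ norm of each product term. Now by independence within the $\rho$-sequence, $\eE\bigl(\prod_{j \in J}(\rho^{(n)}_j)^q\bigr) = \eE\bigl((\rho^{(n)}_1)^q\bigr)^{|J|}$ for any index set $J \subseteq \{1,\dots,n\}$; writing $M_n := \eE((\rho^{(n)}_1)^q)^n = \eE(e^{q\gamma^{(n)}_1})^n$, which is $\leq \sup_n M_n =: C_q < \infty$ by hypothesis (\ref{asm_thm2}), and noting $\eE((\rho^{(n)}_1)^q)^{|J|} \leq \max(1, M_n) \leq \max(1, C_q)$ since $\eE((\rho^{(n)}_1)^q) > 0$, we control all the $\rho$-product factors uniformly in $n$ and in the index set. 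For the $\xi$ factors, $\|\xi^{(n)}_i\|_q = \bigl\|\tfrac{\mu_\xi}{n} + \tfrac{\xi_i - \mu_\xi}{c_\alpha n^{1/\alpha}}\bigr\|_q$; under $(\mathbf{H'})$ we have $\alpha = 2$ and $c_\alpha = 1$, so $\|\xi^{(n)}_i\|_q \leq \tfrac{|\mu_\xi|}{n} + \tfrac{1}{\sqrt n}\|\xi_1 - \mu_\xi\|_q$, which is finite because $\eE(|\xi_1|^q) < \infty$, and is of order $n^{-1/2}$. Hence $\sum_{i=1}^n \|\xi^{(n)}_i\|_q$ is of order $n \cdot n^{-1/2} = \sqrt n$, which does \emph{not} stay bounded.

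This last observation identifies the main obstacle: the crude Minkowski bound is not tight enough, because it discards the martingale-type cancellation among the summands $\xi^{(n)}_i \prod_{j=i+1}^n \rho^{(n)}_j$. The fix is to keep the square structure: conditionally on the $\rho$-sequence, the partial sums $\sum_{i=1}^k \xi^{(n)}_i \prod_{j=1}^i (\rho^{(n)}_j)^{-1}$ form a martingale (since $\eE(\xi^{(n)}_i) = \mu_\xi/n$, one first centers, or one works with the already-known fact that $N^{(n)}$ is a local martingale). I would apply the Burkholder--Davis--Gundy inequality (as already used in the proof of Theorem \ref{thm_convURP}) conditionally on $(\rho^{(n)}_j)_j$ to the stochastic integral $\int_{0+}^1 e^{-R^{(n)}_{s-}}dN^{(n)}_s$, bounding its $q$-th moment by a constant times $\eE\bigl[\bigl(\sum_{i=1}^n e^{-2\sum_{j\le i}\gamma^{(n)}_j}(\tfrac{\xi_i-\mu_\xi}{\sqrt n})^2\bigr)^{q/2}\bigr]$; then another Minkowski (in $L^{q/2}$) over the $n$ terms, combined with $\eE(|\xi_i - \mu_\xi|^q) < \infty$ and the uniform control $\eE(e^{-2\sum_{j\le i}\gamma^{(n)}_j}) = \eE(e^{-2\gamma^{(n)}_1})^i$ — bounded uniformly in $i,n$ by the same type of reasoning — gives $\sum_{i=1}^n (1/n) \cdot (\text{bounded})^{2/i}$-type sums that are $O(1)$. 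Multiplying back by the $\prod_{i}\rho^{(n)}_i$ factor (handled separately as above, using independence and (\ref{asm_thm2}) with exponent $q$, perhaps via Cauchy--Schwarz to split $e^{R^{(n)}_1}$ from the integral) yields $\sup_n \eE(|\theta^{(n)}_1|^q) < \infty$. Care is needed with the drift part $\mu_\xi A^{(n)}$, which contributes $\mu_\xi \int_{0+}^1 e^{-R^{(n)}_{s-}}dA^{(n)}_s$; this is a positive bounded-variation term handled directly by Minkowski as in the computation of $E^{(n)}_{1,T}$ in the proof of Theorem \ref{thm_convURP}, using $A^{(n)}_1 \le 1$. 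Once $\sup_n \eE(|\theta^{(n)}_1|^q) < \infty$ is in hand, uniform integrability of $\{(\theta^{(n)}_1)^p\}$ for $p < q$ is immediate, and the conclusion follows from weak convergence as described in the first paragraph.
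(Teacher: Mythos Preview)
Your overall strategy---weak convergence of $\theta^{(n)}_1$ plus uniform integrability via $\sup_n \eE(|\theta^{(n)}_1|^q)<\infty$---is exactly the paper's. The gap is in the execution of the moment bound. You work with the representation $\theta^{(n)}_1 = e^{R^{(n)}_1}\bigl(y+\int_{0+}^1 e^{-R^{(n)}_{s-}}dX^{(n)}_s\bigr)$ and then need to control factors like $\eE(e^{-2\gamma^{(n)}_1})^i$; you assert these are ``bounded uniformly in $i,n$ by the same type of reasoning'' as (\ref{asm_thm2}). But (\ref{asm_thm2}) controls only the \emph{positive} exponent $\eE(e^{q\gamma^{(n)}_1})^n$, and gives no information whatsoever about $\eE(e^{-2\gamma^{(n)}_1})$; indeed the latter may well exceed $1$, in which case its $i$-th power blows up. Your fallback of splitting the product $e^{R^{(n)}_1}\cdot\int e^{-R^{(n)}_{s-}}dN^{(n)}_s$ by Cauchy--Schwarz fails for the same reason: it would require $\sup_n \eE(e^{2qR^{(n)}_1})<\infty$, which is strictly stronger than the hypothesis. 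The same issue contaminates your treatment of the drift piece, where you invoke the computation of $E^{(n)}_{1,T}$ from Theorem~\ref{thm_convURP}; that computation used the assumption $\sup_n \eE(e^{-2\gamma^{(n)}_1})^n<1$, which is simply a different hypothesis from (\ref{asm_thm2}).

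The paper resolves this by a time-reversal trick: one checks that $\theta^{(n)}_1 \overset{d}{=} e^{R^{(n)}_1}y + \int_{0+}^1 e^{R^{(n)}_{u-}}dX^{(n)}_u$, so the two terms decouple \emph{additively} and the integrand carries a positive exponent. Then Doob's inequality on the submartingale $e^{M^{(n)}_t}$ controls $\sup_t e^{qR^{(n)}_t}$ via (\ref{asm_thm2}), Burkholder--Davis--Gundy reduces matters to $\eE(|N^{(n)}_1|^q)$, and a combinatorial argument with the multinomial theorem finishes. An alternative closer to your sketch that also works: apply BDG \emph{conditionally on the $\rho$'s} directly to the sum $\sum_{i=1}^n (\xi^{(n)}_i-\mu_\xi/n)\prod_{j>i}\rho^{(n)}_j$ (i.e.\ use the additive form (\ref{eq_discreteExplicit}) before factoring out $e^{R^{(n)}_1}$). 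The weights $\prod_{j>i}\rho^{(n)}_j$ carry positive exponents, so $\|\prod_{j>i}\rho^{(n)}_j\|_q^q = \eE((\rho^{(n)}_1)^q)^{n-i}\le \max(1,C_q)$ by (\ref{asm_thm2}), and Minkowski in $L^{q/2}$ then gives the uniform bound without any need for negative-exponent control or Cauchy--Schwarz.
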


Before turning to the proof of the theorem, we give an example to illustrate Condition (\ref{asm_thm2}).

\begin{ex}[Approximation of the moments with NIG log-returns]\label{ex_convMomNIG}\rm
Take $\ln(\rho_1)$ to be a normal inverse gaussian $\mathrm{NIG}(\alpha, \beta, \delta, \mu)$ random variable with $0 \leq |\beta| < \alpha$, $\delta > 0$ and $\mu \in \rR$ (recall Example \ref{ex_1par} for the definition). Fix $q \geq 2$. We can use Taylor's formula on the function $x \mapsto \sqrt{\alpha^2 - (\beta + x)^2}$ around $0$, to obtain 
\begin{equation*}
\sqrt{\alpha^2 - \left(\beta + \frac{q}{\sqrt{n}}\right)^2} = \lambda - \frac{q}{\sqrt{n}}\frac{\beta}{\lambda} - \frac{q^2}{2n}\frac{\alpha^2}{[\alpha^2 - (\beta + x_n)^2]^{3/2}},
\end{equation*}
for some $x_n \in [0, q/\sqrt{n}]$, where $\lambda = \sqrt{\alpha^2 - \beta^2}$ and, recalling (\ref{eq_momgen_NIG}),
$$\lim_{n \to \infty}\eE\left(e^{q\gamma^{(n)}_1}\right)^n = \exp\left(q\mu_{\rho} - \frac{q\delta \alpha^2}{2\lambda^3}\right).$$
Since this limit exists and is finite for each $q \geq 2$, the sequence is bounded and the convergence of the moments depends only on the highest moment of $\xi_1$. Note that the NIG distribution contains the standard Gaussian as a particular case. 
\end{ex}

\begin{proof}[Proof of Theorem \ref{thm_convMom}]
Since by Corollary \ref{thm_2}, we know that $\theta^{(n)}_1 \overset{d}{\to} Y_1$, we have also $(\theta^{(n)}_1)^p \overset{d}{\to} (Y_1)^p$, as $n \to \infty$, for $1 \leq p < q$. It is thus enough to show that the sequence $((\theta^{(n)}_1)^p)_{n \in \nN^*}$ is uniformly integrable, which by de la Vall\'{e}e-Poussin's criterion is implied by the condition $\sup_{n \in \nN^*}\eE(|\theta^{(n)}_1|^{q}) < \infty$.

Define $\tilde{R}^{(n)}_t = R^{(n)}_1 - R^{(n)}_{1 - t}$ and $\tilde{X}^{(n)}_t = X^{(n)}_1 - X^{(n)}_{1 - t}$ the time-reversed processes of $R^{(n)}$ and $X^{(n)}$ which are defined for $t \in [0, 1]$. It is possible to check that $(\tilde{R}^{(n)}_t )_{0 \leq t \leq 1} \overset{d}{=} (R^{(n)}_t )_{0 \leq t \leq 1}$ and $(\tilde{X}^{(n)}_t )_{0 \leq t \leq 1} \overset{d}{=} (X^{(n)}_t )_{0 \leq t \leq 1}$ by checking that the characteristics of these processes are equal (since $[n] - [n(1-t)] - 1 = \mathrm{ceil}(nt) - 1 = [nt]$, where $\mathrm{ceil}$ is the ceiling function), and by applying Theorem II.4.25 p.110 in \cite{js2003}. (See also the example on p.97 in \cite{js2003} for the computation of the characteristics.) Thus, we can imitate the proof of Theorem 3.1. in \cite{carmonapetityor2001} to obtain 
\begin{equation*}
\begin{split}
\theta^{(n)}_1 & = e^{R^{(n)}_1}y + \int_{0+}^1e^{R^{(n)}_1 - R^{(n)}_{s-}}dX^{(n)}_s \overset{d}{=} e^{\tilde{R}^{(n)}_1}y + \int_{0+}^1e^{\tilde{R}^{(n)}_{u-}}d\tilde{X}^{(n)}_u \\
& \overset{d}{=} e^{R^{(n)}_1}y + \int_{0+}^1e^{R^{(n)}_{u-}}dX^{(n)}_u.
\end{split}
\end{equation*}

Then, using the fact that $|a + b|^{q} \leq 2^{q-1}(|a|^q + |b|^q)$, we obtain
$$\eE\left(|\theta^{(n)}_1|^q\right) \leq 2^{q-1}\left[y^q\eE\left(e^{qR^{(n)}_1}\right) + \eE\left(\left|\int_{0+}^1 e^{R^{(n)}_{u-}}dX^{(n)}_u\right|^q\right)\right].$$
Denote by $I_1^{(n)}$ and $I_2^{(n)}$ the expectation appearing on the r.h.s. of the above inequality. We will treat each expectation separately.

For $I^{(n)}_1$ we simply have
\begin{equation}\label{eq_finiteR}
\sup_{n \in \nN^*}I^{(n)}_1 = \sup_{n \in \nN^*}\prod_{i = 1}^{n}\eE(e^{q\gamma^{(n)}_i}) = \sup_{n \in \nN^*}\eE\left(e^{q\gamma^{(n)}_1}\right)^n < \infty.
\end{equation}

For $I^{(n)}_2$, we start by defining $M^{(n)}_t = \sum_{i = 1}^{[nt]} \frac{\ln(\rho_i) - \mu_{\rho}}{\sqrt{n}}$, for $0 \leq t \leq 1$. It is possible to check that $(M^{(n)}_t)_{0 \leq t \leq 1}$ is a martingale, for each $n \in \nN^*$ (for the filtration defined above Theorem \ref{thm_1}.) In fact, the martingale property is checked in the same manner as for $X^{(n)}$ in the proof of Theorem \ref{thm_1} and the integrability is clear. Thus, $(e^{M^{(n)}_t})_{0 \leq t \leq 1}$ is a submartingale, and using Doob's inequality we obtain
\begin{equation*}
\begin{split}
\eE\left(\left|\sup_{0 \leq t \leq 1}e^{R^{(n)}_t}\right|^q\right) & = \left(\sup_{0 \leq t \leq 1}e^{q\mu_{\rho}\frac{[nt]}{n}}\right)\eE\left(\left|\sup_{0 \leq t \leq 1}e^{M^{(n)}_t}\right|^q\right) \\
& \leq \mathrm{max}(1, e^{\mu_{\rho}q})\left(\frac{q}{q-1}\right)^q \eE(e^{qM^{(n)}_1})\\
& = \mathrm{max}(1, e^{-\mu_{\rho}q})\left(\frac{q}{q-1}\right)^q \eE(e^{qR^{(n)}_1}).
\end{split}
\end{equation*}
And so the assumption (\ref{asm_thm2}) together with (\ref{eq_finiteR}) imply that 
\begin{equation}\label{eq_supsupInteg}
\sup_{n \in \nN^*}\eE\left(\left|\sup_{0 \leq t \leq 1}e^{R^{(n)}_t}\right|^q\right) < \infty.
\end{equation}

Writing $X^{(n)}_t = \mu_{\xi}A^{(n)}_t + N^{(n)}_t$, with the processes defined in the proof of Theorem \ref{thm_1} and using the fact that $|A^{(n)}_t| \leq t$, for all $t \geq 0$, we have
\begin{equation*}
\begin{split}
I^{(n)}_2 & \leq 2^{q-1}\left[\eE\left(|\mu_{\xi}|\left|\int_{0+}^1e^{R^{(n)}_{s-}}dA^{(n)}_s\right|^q\right) + \eE\left(\left|\int_{0+}^1e^{R^{(n)}_{s-}}dN^{(n)}_s\right|^q\right)\right] \\
& \leq 2^{q-1}|\mu_{\xi}|^q\eE\left(\left|\sup_{0 \leq t \leq 1}e^{R^{(n)}_1}\right|^q\right) + 2^{q-1}\eE\left(\left|\int_{0+}^1e^{R^{(n)}_{s-}}dN^{(n)}_s\right|^q\right).
\end{split}
\end{equation*}
But, from the Burkholder-Davis-Gundy inequality applied twice and the independence of the sequences, we obtain
\begin{equation*}
\begin{split}
\eE\left(\left|\int_{0+}^1e^{R^{(n)}_{s-}}dN^{(n)}_s\right|^q\right) & \leq D_q\eE\left(\left|\int_{0+}^1e^{2R^{(n)}_{s-}}d[N^{(n)}]_s\right|^{q/2}\right) \\
& \leq D_q\eE\left(\sup_{0 \leq t \leq 1}e^{qR^{(n)}_1}\right) \eE([N^{(n)}]_1^{q/2}) \\
& \leq d_q D_q \eE\left(\left|\sup_{0 \leq t \leq 1}e^{R^{(n)}_1}\right|^q\right) \eE(|N^{(n)}_1|^q)
\end{split}
\end{equation*}
for some positive constants $D_q$ and $d_q$. Thus,
\begin{equation*}
\begin{split}
\sup_{n \in \nN^*}I^{(n)}_2 & \leq 2^{q-1}|\mu_{\xi}|\sup_{n \in \nN^*}\eE\left(\left|\sup_{0 \leq t \leq 1}e^{R^{(n)}_t}\right|^q\right) \\
& + 2^{q-1}d_q D_q\sup_{n \in \nN^*}\eE\left(\left|\sup_{0 \leq t \leq 1}e^{R^{(n)}_t}\right|^q\right)\sup_{n \in \nN^*}\eE\left(|N^{(n)}_1|^q\right),
\end{split}\
\end{equation*}
and so (\ref{eq_supsupInteg}) implies that it is enough to check that $\sup_{n \in \nN^*}\eE(|N^{(n)}_1|^q)$ is finite. (Note that a similar argument to the one used to prove the finiteness of $I^{(n)}_2$ is used in a different context in the proof of Lemma 5.1 in \cite{bankovsky}.)

Using the multinomial theorem, we obtain
\begin{equation*}
\begin{split}
\eE(|N^{(n)}_1|^q) = \sum_{k_1 + \dots + k_n = q} \binom{q}{k_1, \dots, k_n} \prod_{i = 1}^{n}\eE\left(\left(\frac{\xi_1 - \mu_{\xi}}{\sqrt{n}}\right)^{k_i}\right),
\end{split}
\end{equation*}
where the sum is taken over all non-negative integer solutions of $k_1 + \dots + k_n = q$. Since $\eE((\xi_1 - \mu_{\xi})^{k_i}) = 0$, when $k_i = 1$, we can sum over the partitions with $k_i \neq 1$ for all $i = 1, \dots, n$. Now, since $k_i/q \leq 1$, we have by Jensen's inequality
\begin{equation*}
\begin{split}
\prod_{i = 1}^{n}\eE\left(\left(\frac{\xi_1 - \mu_{\xi}}{\sqrt{n}}\right)^{k_i}\right) & \leq \prod_{i = 1}^{n}\eE\left(\left|\frac{\xi_1 - \mu_{\xi}}{\sqrt{n}}\right|^{q}\right)^{\frac{k_i}{q}} \\
& = n^{-q/2}\eE(|\xi_1 - \mu_{\xi}|^{q}).
\end{split}
\end{equation*}
Thus,
$$I^{(n)}_3 \leq 2^{q-1} \eE(|\xi_1 - \mu_{\xi}|^{q}) n^{-q/2} \sum_{k_1 + \dots + k_n = q, k_i \neq 1} \binom{q}{k_1, \dots, k_n}.$$
where $I_3^{(n)} = \eE(|N_1^{(n)}|^q)$ and since we need to take the supremum over $n \in \nN^*$, we need to check that the r.h.s. of the above inequality is bounded in $n$. For this, note that the sum of multinomial coefficients is equal to
\begin{equation}\label{eq_binomMom}
\sum_{i = 1}^{[q/2]}\binom{n}{i}\sum_{l_1 + \dots + l_i = q - 2i} \binom{q}{l_1 + 2, \dots, l_i + 2},
\end{equation}
where, for each $i = 1, \dots, [q/2]$, the second sum is taken over all non-negative integer solutions of $l_1 + \dots + l_i = q - 2i$. This follows from the fact that if $(k_1, \dots, k_n)$ are non-negative integer solutions of $k_1 + \dots + k_n = q$, we have, since $k_i \neq 1$, that the number of non-zero terms in $(k_1, \dots, k_n)$ is at most $[q/2]$. Thus, letting $i$ be the number of non-zero terms and $(j_1, \dots, j_i)$ their indices, we find that $(k_{j_1} - 2) + \dots + (k_{j_i} - 2) = q - 2i$, which yields the claimed equality. Then, we have
$$\binom{q}{l_1 + 2, \dots, l_i + 2} \leq C_i \binom{q - 2i}{l_1, \dots, l_i},$$
with $C_i = 2^{-i}q!/(q-2i)!$ and
$$\sum_{l_1 + \dots + l_i = q - 2i}\binom{q-2i}{l_1, \dots, l_i} = i^{q-2i}.$$
Let $C_q = \max_{i = 1, \dots, [q/2]}{C_i}$ and $K_q = 2^{q-1} C_q \eE(|\xi_1 - \mu_{\xi}|^{q})$, remark that the binomial coefficient in (\ref{eq_binomMom}) is bounded by $n^{[q/2]}$ and that
$$I^{(n)}_3 \leq K_qn^{-q/2}\sum_{i = 1}^{[q/2]}\binom{n}{i}i^{q-2i} \leq K_q n^{-q/2 + [q/2]}\sum_{i = 1}^{[q/2]}i^{q-2i},$$
which is bounded in $n$. Thus, $\sup_{n \in \nN^*} I^{(n)}_3 < \infty$, $\sup_{n \in \nN^*}I^{(n)}_2 < \infty$ and
$$\sup_{n \in \nN^*}\eE\left(|\theta^{(n)}_1|^q\right) \leq 2^{q-1}y^q\sup_{n \in \nN^*} I^{(n)}_1 + 2^{q-1}\sup_{n \in \nN^*} I^{(n)}_2 < \infty.$$
So, the sequence $((\theta^{(n)}_1)^p)_{n \in \nN^*}$ is uniformly integrable and we have $\lim_{n \to \infty} \eE[(\theta^{(n)}_1)^p] = \eE[(Y_1)^p]$.
\end{proof}

\section*{Acknowledgements}

The authors thank Lioudmila Vostrikova for helpful discussions and comments on the paper. They also would like to acknowledge financial support from the D\'{e}fiMaths project of the "F\'{e}d\'{e}ration de Recherche Math\'{e}matique des Pays de Loire" and from the PANORisk project of the "R\'{e}gion Pays de la Loire".

%\nocite{*}
\bibliographystyle{plain}
\bibliography{func_conv}

\end{document}